\newtheorem{Theorem}{Theorem}[section]
\newtheorem{Lemma}[Theorem]{Lemma}
\newtheorem{Proposition}[Theorem]{Proposition}
\newtheorem{Corollary}[Theorem]{Corollary}
\theoremstyle{definition}
\newtheorem{Definition}{Definition}
\theoremstyle{remark}
\newtheorem{Remark}[Theorem]{Remark} 
\numberwithin{equation}{section}
\renewcommand{\k}{\kappa}
\newcommand{\R}{\mathbb R}
\newcommand{\C}{\mathbb C}
\newcommand{\D}{\mathbb D}
\newcommand{\SOn}{{\rm SO}^+ (1, n+1)}
\newcommand{\SOo}{{\rm SO}^+ (1,1)}
\newcommand{\SOf}{{\rm SO}^+ (1,3)}
\newcommand{\SOnf}{{\rm SO} (n-2)}
\newcommand{\Gr}{\operatorname{Gr}_{1,1}}
\newcommand{\Grf}{\operatorname{Gr}_{3,1}}
\newcommand{\la}{\langle}
\newcommand{\ra}{\rangle}
\newcommand{\SSS}{\mathbb S}
\newcommand{\LL}{\mathbb L}
\newcommand{\G}{V}
\newcommand{\Z}{Z}
\newcommand{\real}{\mathbb R}
\newcommand{\di}{\operatorname{diag}}
\renewcommand{\Re}{\operatorname {Re}}
\renewcommand{\Im}{\operatorname {Im}}
\newcommand{\hy}{\widehat{Y}}
\newcommand{\red}[1]{{\leavevmode\color{red}{#1}}} % Shimpei's comment
\newcommand{\blue}[1]{{\leavevmode\color{blue}{#1}}}
\renewcommand{\red}[1]{#1} % Without color
\renewcommand{\blue}[1]{#1} % Without color
\begin{document}

\title[Gauss maps of M\"obius surfaces in $\SSS^n$]{Gauss maps of M\"obius surfaces in the $n$-dimensional sphere}
\author{David Brander}
\address{Department of Applied Mathematics and Computer Science
Technical University of Denmark
2800 Lyngby (Copenhagen)
Denmark}
\email{dbra@dtu.dk}
 \author[S.-P.~Kobayashi]{Shimpei Kobayashi}
 \address{Department of Mathematics, Hokkaido University, 
 Sapporo, 060-0810, Japan}
 \email{shimpei@math.sci.hokudai.ac.jp}
\author{Peng Wang}
\address{School of Mathematics and Statistics, Key Laboratory of Analytical Mathematics and Applications (Ministry of Education), Fujian Key Laboratory of Analytical Mathematics and Applications (FJKLAMA), Fujian Normal University, Fuzhou 350117, P. R. China} \email{netwangpeng@163.com, pengwang@fjnu.edu.cn}

 \thanks{The first named author is partially supported by Independent Research Fund Denmark, grant 9040-00196B.
 The second named author is partially supported by JSPS 
 KAKENHI Grant Number JP18K03265, JP22K03304.
 The third named author is  supported by the Project 12371052 of NSFC}
 \subjclass[2020]{Primary~53A31; 53C43 , Secondary~58E20; 53C35}
 \keywords{Willmore surfaces; Gauss maps; harmonic maps; flat connections }
 \date{\today}
\pagestyle{plain}
\begin{abstract}
\red{In this note we discuss Gauss maps for M\"obius surfaces in the
$n$-sphere, and their applications in the study of Willmore surfaces.
One such ``Gauss map'', naturally associated to a Willmore surface that has a dual Willmore surface, is the Lorentzian $2$-plane bundle given by 
a lift of the suface and its dual. More generally, we define the concept of a Lorentzian $2$-plane lift for an arbitrary M\"obius surface, and
show that the conformal harmonicity of this lift is equivalent to the Willmore condition for the surface. This clarifies some previous work of 
F. H\'elein, Q. Xia-Y Shen, X. Ma and others, and, for instance, allows for the treatment of the Bj\"orling problem for Willmore surfaces in the presence of umbilics.}
\end{abstract}
\maketitle
%\tableofcontents
%%%%%%    TEXT START    %%%%%%%%
\section*{Introduction}
Gauss maps of submanifolds have been extensively used in the study of special submanifolds, including minimal submanifolds. To study surfaces in an $n$-dimensional M\"obius space, the \textit{conformal Gauss map}, which represents the central sphere congruence, proves particularly useful \cite{Bl}. 
It is known that a surface is Willmore if and only if its conformal Gauss map is conformally harmonic \cite{Bryant1984}, \cite{Ejiri1988}.

The conformal Gauss map can be regarded as a map into the  Grassmannian symmetric space of $4$-dimensional Lorentzian subspaces in the $(n+2)$-dimensional Minkowski space. This formulation enables an application of integrable systems methods. From the harmonicity of the conformal Gauss map, one can derive a family of flat connections, leading to a family of moving frames, i.e., a map into the loop group of the indefinite orthogonal group $\SOn$ as in \eqref{eq:loopgroup}. This perspective was initiated in \cite{Helein} and further developed in \cite{DoWaGeneric}.

While the conformal Gauss map uniquely characterizes a M\"obius surface and its harmonicity defines a Willmore surface, it has some limitations. For example, defining the conformal Gauss map requires information about a $4$-dimensional Lorentzian subspace determined by second-order contact. Additionally, at umbilic points, the conformal Gauss map degenerates, complicating the analysis. To address these issues, a more geometrically intuitive Gauss map has been proposed. This alternative map targets the Grassmannian of $2$-dimensional Lorentzian planes in the $4$-dimensional Lorentzian subspace, \cite{Helein}, \cite{Ma2006}.

This Gauss map can be easily constructed for special Willmore surfaces (the so-called S-Willmore surfaces, \cite{Ejiri1988} and
Section \ref{sbsc:SWill} below). It is defined by Lorentzian planes and denoted by $Z = [Y \wedge \widehat{Y}]$, where $Y$ is a lift of a Willmore surface, and $\widehat{Y}$ is its dual Willmore surface, and $Z$ is a conformal harmonic map. S. Ma extended this approach to a broader class of Willmore surfaces, including S-Willmore surfaces, and introduced the concept of the ``adjoint surface'', where $\widehat{Y}$ is another Willmore surface, not necessarily dual to $Y$ (\cite{Ma2006}). 
Moreover, this idea was thoroughly explored in \cite{Helein}, where a family of such Gauss maps was considered. F. Hél\'ein characterized Willmore surfaces under the condition of ``roughly harmonicity'' and applied integrable systems methods. 

 In this \red{note}, we define a generalization of this Gauss map for all surfaces in M\"obius space, referred to as an \textit{$\mathbb{L}$-lift} (Definition \ref{def:llift}). While the choice of an $\mathbb{L}$-lift is 
 not unique, this flexibility allows for a special selection such
 that an $\LL$-lift is conformal. Note that the conformality is characterized by the Riccati equation; thus, such an $\LL$-lift exists locally but not globally, in general. (However, there exist many globally defined conformal $\LL$-lifts, see Remark \ref{rmk:global}). Then, using a conformal $\mathbb{L}$-lift, we establish the equivalence between the harmonicity of the $\mathbb{L}$-lift and the Willmore property of the surface. We finally provide a characterization of Willmore surfaces through integrable systems methods (Theorem \ref{thm:RuhVilms}).

In Section \ref{sc:Harmoniclift}, we focus on S-Willmore surfaces and the adjoint transformation of a Willmore surface, analyzing them via an $\mathbb{L}$-lift with a Riccati equation. Furthermore, we investigate the properties of the moving frame associated with these structures (Proposition \ref{pp:various}).

The characterization of Willmore surfaces via the $\mathbb{L}$-lift presented in this short paper forms the basis for addressing the Bj\"orling problem. (The problem involves reconstructing surfaces from boundary curves and the Gauss map data along those curves). Our approach will be further explored in a forthcoming paper \cite{BKW}, focusing on minimal surfaces in $\mathbb{S}^n$ and $\mathbb{H}^n$.

{\bf Acknowledgments:}
\red{The third named author} is thankful to Prof. Xiang Ma and Prof. Zhenxiao Xie for valuable discussions.

\section{Preliminaries}
\subsection{M\"obius surfaces in $\mathbb S^n$ and the conformal Gauss maps}\label{sbsc:conformal}
 The following M\"obius equivalence is known \cite{Bl}:
$\mathbb S^n \cong \mathbb P (\mathcal C_+^{n+1})$ by $x \leftrightarrow [(1, x)]$,
 where 
 \[\mathcal C_+^{n+1}= \left\{ v \in \mathbb R^{n+1, 1} \mid \langle v, v\rangle= -v_0^2 +
 \sum_{j=1}^{n+1} v_j^2=0, \, v_0>0 \right\} \subset \mathbb R^{n+1, 1}
 \]
 is the \textit{forward light 
 cone} in the $n+2$-dimensional Minkowski space $\mathbb R^{n+1, 1}$.
  We define a \textit{M\"obius surface} to be an immersion $y: M \to  \mathbb S^n \cong \mathbb P (\mathcal C_+^{n+1})$, where the Riemann surface $M$ has the conformal structure 
 induced by any  M\"obius equivalent choice of Riemannian metric on $\SSS^n$.

 Let $M$ be a Riemann surface with conformal coordinate $z$ 
 and $y: M \to \mathbb S^n$ be a conformal immersion. 
 Let $Y: \mathbb D \subset M \to \mathcal C^{n+1}_+$ be a local lift of $y$, that is, 
 $[1,y]= [Y]$ holds. It is known that 
 the conformality of $y$ is equivalent to 
 $\langle Y_z, Y_z \rangle  =0$ and $\langle Y_{z}, Y_{\bar z} \rangle >0$, 
 where the subscript $z$ (resp. $\bar z$) denotes the partial derivative with respect to $z$ (resp. $\bar z$)
 and $\la\,, \,\ra$ denotes the complex bilinear extension of the inner product of $\R^{n+1, 1}$.
 Let us decompose the trivial bundle 
 $M \times \mathbb R^{n+1,1}$ as $V \oplus V^{\perp}$ with 
\begin{equation}\label{eq:V}
 V = \operatorname{span}\left\{ Y, \, \Re Y_z,\, \Im Y_z,\, Y_{z \bar z}  \right\},
\end{equation}
 and $V^{\perp}$ denotes the orthogonal complement of $V$ in $\mathbb R^{n+1, 1}$.
 Clearly $V$ is independent of the choice of a lift $Y$ 
 and the coordinate $z$, thus $V$ is a Lorentzian rank four sub-bundle.
 Let us denote $V_{\mathbb C}$ (resp. $V_{\mathbb C}^{\perp}$) 
 as the complexification of $V$ (resp. $V^{\perp}$). 
 Thus we reach   
 the following definition:
 \begin{Definition}\label{def:conGauss}
 For a M\"obius surface $y : M \to \mathbb S^n$, let $V$ be the sub-bundle in \eqref{eq:V} 
 and denote an evaluation at $p \in M$ by $V_p$.
 Then the map 
 \[
 \G: M \to \Grf = \SOn /( \SOf \times \SOnf), 
  \]
 defined by $\G(p)=[V_p]$  
 will be called the \textit{conformal Gauss map} 
 of $y$. Here $\SOnf$ and  $\SOn$ denote
 degree $n$ orthogonal group and 
 the identity component of degree 
 $n+2$ indefinite orthogonal group, respectively.  
\end{Definition}
\begin{Remark}
 It is well known that the conformal Gauss map $\G$ is 
 obtained by the mean curvature $2$-spheres of a M\"obius surface
 in $\mathbb S^n$, the central sphere congruence, through the identification $\SSS^n \cong  P (\mathcal C_+^{n+1})$, and it is 
 conformal for any M\"obius surface.
\end{Remark}
 \subsection{The canonical lift, the Willmore energy and Willmore surfaces}\label{sbsc:canonical}
 The \textit{canonical lift} with respect to $z$ is defined by a lift $Y$ such that 
\begin{equation}\label{eq:canonical}
 |d Y|^2 = |dz|^2
\end{equation}
 holds, that is, $\langle Y_{z}, Y_{\bar z}\rangle=1/2$ holds.
 For the canonical lift $Y$, it is easy to see that 
 $Y_{zz}$ is orthogonal to $Y_z, Y_{\bar z}$ and $Y_{z \bar z}$, and there 
 exists a complex function $s : \mathbb D \to \C$ and a section $\k \in \Gamma (V_{\mathbb C}^{\perp})$
 such that 
\begin{equation}\label{eq:Hill}
 Y_{z z} +\frac12{s} Y =\k
\end{equation}
 holds, that is, $Y$ satisfies the \textit{inhomogeneous Hill's equation}. 
 The function $s$ defined above
  will be called the \textit{Schwarzian derivative} of 
 the immersion $y$ with respect to the coordinate $z$ and it will be denoted by $S_z (y ) := s$,
 and the section $\k$ will be called the \textit{Hopf differential}
 of the immersion $y$.
 The reasons why we respectively call $s$ and $\k$ the Schwarzian derivative  and the Hopf 
 differential will be given below. (See Formula (23) and (24) of \cite{BPP}).
\begin{Proposition}[\cite{BPP}]\label{Pp:invariants}
 The Schwarzian derivative $s$ and the Hopf differential $\k$ 
 of an immersion $y$ are  invariant under M\"obius transformations. 
 Moreover, the quadratic differential $\k \frac{dz^2}{|dz|}$ 
 is globally defined on $M$ with values in $\mathcal L V^{\perp} 
 \otimes \mathbb C$ and $\mathcal L = (K \bar K)^{-1/2}$, 
 where $K$ is the canonical bundle \footnote{The real line bundle $\mathcal L = (K \bar K)^{-1/2}$ has been called the 
 \textit{$1$-density bundle}, see \cite{BPP}.}  of $M$ and 
 $\k \frac{dz^2}{|dz|} = \frac{\;{\rm I\!I}^{(2, 0)}}{|dy|}$
 holds, where ${\rm I\!I}^{(2, 0)}$ denotes the $(2, 0)$-part of the normal bundle-valued second fundamental form of $y$. 
\end{Proposition}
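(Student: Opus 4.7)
The plan is to establish the three claims in turn, using only \eqref{eq:Hill} and an explicit form of the canonical lift. M\"obius invariance is essentially immediate from linearity of the $\SOn$ action on $\R^{n+1,1}$; the coordinate-change behaviour produces in one stroke both the classical Schwarzian cocycle (justifying the name for $s$) and the correct tensorial weight of $\kappa$; the identification with $\II^{(2,0)}/|dy|$ follows by substituting $Y=e^{-u}(1,y)$ into \eqref{eq:Hill} and projecting onto $V^\perp_\C$.

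For M\"obius invariance, fix $A\in\SOn$. Since $A$ is a Lorentz isometry, $AY$ is a lift of $A\cdot y$ and $|d(AY)|^2=|dz|^2$, so $AY$ is the canonical lift of $A\cdot y$ with respect to the same coordinate $z$. Applying $A$ to \eqref{eq:Hill} gives $(AY)_{zz}+\tfrac12 s\,(AY)=A\kappa$ with $A\kappa\in\Gamma((AV)^\perp_\C)$, so $s$ is unchanged and $\kappa$ is carried rigidly by the ambient motion.

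For the coordinate change $w=w(z)$, the condition $|d(\mu Y)|^2=|dw|^2$ forces $\mu=|w'|$, so $\tilde Y=|w'|Y$ is the canonical lift with respect to $w$. Using $\partial_w=(w')^{-1}\partial_z$, the identity $2\partial_z|w'|=|w'|(w''/w')$ (which makes the $Y_z$-term of $\tilde Y_{ww}$ vanish), and \eqref{eq:Hill}, a direct calculation yields
\begin{equation*}
\tilde Y_{ww} = -\frac{s-\{w,z\}}{2(w')^2}\,\tilde Y + \frac{|w'|}{(w')^2}\,\kappa,
\end{equation*}
where $\{w,z\}=(w''/w')'-\tfrac12(w''/w')^2$ is the classical Schwarzian of $w$. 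Comparing with $\tilde Y_{ww}+\tfrac12\tilde s\,\tilde Y=\tilde\kappa$ gives $s=\tilde s(w')^2+\{w,z\}$ (the Schwarzian cocycle, justifying the name for $s$) and $\tilde\kappa\,(w')^2=|w'|\,\kappa$; the latter is exactly the relation that makes $\kappa\,\frac{dz^2}{|dz|}=\tilde\kappa\,\frac{dw^2}{|dw|}$, so $\kappa\,\frac{dz^2}{|dz|}$ descends to a global section of $\mathcal L\,V^\perp_\C$ with $\mathcal L=(K\bar K)^{-1/2}$.

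For the final identification, write $Y=e^{-u}(1,y)$ with $e^{2u}|dz|^2=|dy|_M^2$. Using $(0,y_z)=e^u(Y_z+u_z Y)$ together with the $\R^{n+1}$-decomposition $y_{zz}=2u_z y_z+\II^{(2,0)}$ (coming from $\langle y,y_z\rangle=\langle y_z,y_z\rangle=0$ and $\langle y_z,y_{\bar z}\rangle=\tfrac12 e^{2u}$), one obtains $Y_{zz}=(u_z^2-u_{zz})Y+e^{-u}(0,\II^{(2,0)})$. Projecting onto $V^\perp_\C$, identified via the natural isometry with the complexified normal bundle of $y(M)\subset\SSS^n$, yields $\kappa=e^{-u}\II^{(2,0)}$; since $|dy|=e^u|dz|$ this gives $\kappa\,dz^2/|dz|=\II^{(2,0)}/|dy|$. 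The main obstacle is the bookkeeping in the coordinate-change step: after expanding $\tilde Y_{ww}$ one must carefully verify that the $Y_z$-coefficient vanishes and that the $Y$-coefficient reassembles into the classical Schwarzian combination; once that calculation is done cleanly, the rest of the argument is routine.
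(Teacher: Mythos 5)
The paper does not actually prove Proposition \ref{Pp:invariants}; it is quoted from \cite{BPP}, and the surrounding text only records the transformation rules \eqref{eq:kappainw}--\eqref{eq:tildes} that your computation reproduces. Your argument is correct and is the standard derivation: the M\"obius invariance from linearity of the $\SOn$-action on canonical lifts, the cocycle computation for $w=w(z)$ (your formula for $\tilde Y_{ww}$ checks out and matches \eqref{eq:kappainw} and \eqref{eq:tildes} exactly), and the Euclidean lift $Y=e^{-u}(1,y)$ for the last identity. The one point you should make explicit rather than leave implicit in the phrase ``projecting onto $V^\perp_\C$'': the vector $(0,{\rm I\!I}^{(2,0)})$ is orthogonal to $Y$, $Y_z$, $Y_{\bar z}$ but \emph{not} in general to $Y_{z\bar z}$, since $\langle (0,{\rm I\!I}^{(2,0)}), Y_{z\bar z}\rangle=\tfrac12 e^{u}\langle {\rm I\!I}^{(2,0)},\vec H\rangle$, which need not vanish for $n>3$. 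The natural isometry between the normal bundle of $y$ in $\SSS^n$ and $V^\perp$ is therefore $\xi\mapsto (0,\xi)+e^{u}\langle\xi,\vec H\rangle Y$ (an isometry because the correction is along the null vector $Y$, which is orthogonal to $(0,\xi)$); with this, $\kappa=e^{-u}\bigl((0,{\rm I\!I}^{(2,0)})+e^{u}\langle {\rm I\!I}^{(2,0)},\vec H\rangle Y\bigr)$ and your conclusion $\kappa\,dz^2/|dz|={\rm I\!I}^{(2,0)}/|dy|$ follows. Without that correction your displayed formula for $Y_{zz}$ would misidentify both $\kappa$ (by a multiple of $Y$) and the Schwarzian $s$ (by $\langle {\rm I\!I}^{(2,0)},\vec H\rangle$-terms); since you only use the $V^\perp_\C$-projection, the proof stands, but the remark is worth adding.
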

\begin{Remark}
 For another complex coordinate $w$, the canonical lift $\widetilde Y$ with respect to 
 $w$ is given by $\widetilde Y = Y \left|w_z\right|$ 
 in terms of the canonical lift $Y$ with respect to coordinate $z$
  because of \eqref{eq:canonical}. Then by
 the invariance of $\k \frac{dz^2}{|dz|}$, the coefficient function 
 $\widetilde \k$ with respect to $w$ can be represented as
\begin{equation}\label{eq:kappainw}
 \widetilde \k = \k \frac{|w_z|}{w_z^{2}}.
 %\left(\frac{dz}{dw}\right)^2\left|\frac{dz}{dw}\right|^{-1}.
\end{equation}
 Moreover, the Schwarzian derivative $S_z (y)$ 
 satisfies the transformation rule: 
\begin{equation}\label{eq:Schwarzian}
 S_w (y) dw^2 = (S_z (y) - S_z (w))dz^2,
\end{equation}
 where $S_z (w)$ is the 
 classical Schwarzian derivative of $w$ with respect to $z$.
 More explicitly, $s = S_z(y)$ and $\tilde s = S_w(y)$ 
 satisfy the following transformation rule, 
\begin{equation}\label{eq:tildes}
 \tilde s =
\left(\frac1{w^{\prime}}\right)^2 \left(s- \left(\frac{w^{\prime \prime}}{w^{\prime}}\right)^{\prime}+\frac12 \left(\frac{w^{\prime \prime}}{w^{\prime}}\right)^{2} \right).
\end{equation}
 Here we use the abbreviation $w^{\prime} = w_z$.
\end{Remark}

%\begin{Definition}\label{def:Willmore}

%\end{Definition}

\begin{Theorem}[\cite{Bryant1984}, \cite{Ejiri1988}]\label{Pp:Willmore}
 The M\"obius transformation invariant energy 
 \[
 W(y) = 2 \sqrt{-1}\int_M \langle \k, \, \overline \k \rangle\, dz
 \wedge d \bar z
 \]
 will be called the {\rm Willmore energy} of a conformal immersion 
 $y : M \to \mathbb S^n$.
 Moreover, a surface which attains a critical point of the Willmore energy 
 will be called a {\rm Willmore surface}. Then  
 a conformal immersion $y: \mathbb D \to \mathbb S^n$ is \textit{Willmore} 
 if and only if 
\begin{equation*}
D_{\bar z}  D_{\bar z} \k + \frac12 \bar s\k =0 
\tag{Willmore}
\end{equation*}
 holds. Here $\k$ and $s$ are the Hopf differential and the Schwarzian derivative 
 of the surface $y$, respectively.
 Moreover, $y: \mathbb D \to \mathbb S^n$ is \textit{Willmore} if and only if the conformal
 Gauss map $V:\mathbb D \to \Grf$ is conformal harmonic.
\end{Theorem}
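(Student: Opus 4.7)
The plan is to establish the three assertions in order: M\"obius invariance of $W(y)$, the variational characterization, and the equivalence with conformal harmonicity of the conformal Gauss map.

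First, M\"obius invariance follows from Proposition~\ref{Pp:invariants}. The transformation rule \eqref{eq:kappainw} gives $|\tilde\kappa|^2 = |\kappa|^2/|w_z|^2$, while $d\tilde z \wedge d\bar{\tilde z} = |w_z|^2\, dz\wedge d\bar z$ under a biholomorphic coordinate change; hence the integrand $\langle \kappa,\bar\kappa\rangle\, dz\wedge d\bar z$ descends to a global real 2-form on $M$. M\"obius transformations of the ambient $\SSS^n$ act on a canonical lift by a linear isometry of $\R^{n+1,1}$ followed by a conformal rescaling that is tracked by the same proposition, so the integral is genuinely invariant.

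Next, I would derive the Willmore PDE by computing the first variation of $W$ along a compactly supported variation $y_t$ of the immersion. Working with a canonical lift $Y_t$ in a fixed conformal coordinate, I would use the structure equation \eqref{eq:Hill} to express the first-order change $\dot\kappa$ of the Hopf section and $\dot s$ of the Schwarzian in terms of the variation field, substitute into $W$, and integrate by parts twice with respect to the normal connection $D$ on $V^\perp$. Tangential variations contribute zero by conformal reparametrization invariance of $W$, and after collecting the normal components the Euler--Lagrange equation reads $D_{\bar z}D_{\bar z}\kappa + \tfrac12\bar s\, \kappa = 0$, as claimed.

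Finally, for the equivalence with harmonicity of the conformal Gauss map, I would choose a local orthonormal frame of $M\times \R^{n+1,1}$ whose first four vectors span $V$ and whose last $n$ span $V^\perp$, producing a lift of the conformal Gauss map $\G$ into $\SOn$. Decomposing the Maurer--Cartan form along $\Lie(\SOn) = \mathfrak{h}\oplus\mathfrak{m}$, with $\mathfrak{h} = \Lie(\SOf\times\SOnf)$, the tension field of $\G$ is read off from the $\mathfrak{m}$-valued piece of the connection, whose off-diagonal $(V,V^\perp)$-block is controlled by $\kappa$ and $s$ through \eqref{eq:Hill}. A direct calculation then identifies the vanishing of this tension field with the Willmore equation from the previous paragraph, and conformality of $\G$ holds for any M\"obius surface by the Remark following Definition~\ref{def:conGauss}. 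The main obstacle is this last identification: one must carefully distinguish the normal connection $D$ from the trivial flat connection on $M\times\R^{n+1,1}$, and recognize the Schwarzian term $\tfrac12\bar s\,\kappa$ as arising from the $V$-component of an iterated derivative of $Y$ that re-enters via \eqref{eq:Hill}.
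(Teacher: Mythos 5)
First, a point of comparison: the paper does not prove this theorem at all --- it is quoted as a classical result of Bryant and Ejiri (in the $s$, $\kappa$ formulation of \cite{BPP}), so there is no in-paper argument to measure your proposal against. Judged on its own terms, your first part is complete and correct: \eqref{eq:kappainw} together with $dw\wedge d\bar w=|w_z|^2\,dz\wedge d\bar z$ shows $\langle\kappa,\bar\kappa\rangle\,dz\wedge d\bar z$ is a well-defined $2$-form on $M$, and M\"obius invariance is exactly Proposition \ref{Pp:invariants}.

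The other two parts, however, are descriptions of computations rather than computations, and they are precisely where the content of the theorem lies. For the Euler--Lagrange equation you never actually produce $\dot\kappa$ and $\dot s$ in terms of the variation field; worse, working ``with a canonical lift $Y_t$ in a fixed conformal coordinate'' quietly ignores that the conformal structure induced by $y_t$ varies with $t$, so $z$ ceases to be conformal for $y_t$ and $\kappa$, $s$ are not even defined by \eqref{eq:Hill} along the variation without first correcting the coordinate (or reformulating $W$ in a parametrization-free way, e.g.\ as the integral of the squared norm of the trace-free second fundamental form, and only then varying). This is the step most likely to go wrong if carried out naively, and ``tangential variations contribute zero'' also deserves justification in this conformal setup. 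Likewise, for the equivalence with harmonicity of $V$ you assert that ``a direct calculation identifies'' the tension field with the Willmore operator; that calculation --- framing $V\oplus V^{\perp}$, extracting the $\mathfrak m$-valued $(1,0)$-part of the Maurer--Cartan form via the structure equations \eqref{eq:fundamental}, and checking that the harmonic map equation reduces to $D_{\bar z}D_{\bar z}\kappa+\tfrac12\bar s\,\kappa=0$ --- is the analogue of what the paper actually does write out for the $\LL$-lift in the proof of Theorem \ref{thm:RuhVilms}, and it must be performed, not invoked; one should also address the degeneration of the induced metric $\langle\kappa,\bar\kappa\rangle|dz|^2$ at umbilic points, where ``conformal harmonic'' needs interpretation. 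As it stands your proposal is a correct roadmap to the Bryant--Ejiri argument, not yet a proof.
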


%%%%%%%%%%%%%%%%%%%%%%%%%%%%%%%%%%%%%%%%%%%%%%%

\section{Lorentzian $2$-plane lifts ($\LL$-lifts) of a M\"obius surface}
\subsection{Lorentzian $2$-plane lifts ($\LL$-lifts)}
 The conformal Gauss map $\G$ becomes 
 degenerate at umbilic points.  H\'elein \cite{Helein} defined an alternative 
 map which does not have that problem, essentially as follows.
 
For a Willmore surface $y$ with a dual Willmore surface $\hat y$, there is (provided $Y \wedge \widehat Y \neq 0$) a naturally defined bundle of Lorentzian
2-planes defined by $[ Y \wedge \widehat Y]  : M \to \Gr =
 \SOn/(\SOo \times {\rm SO}(n))$.   We generalize this first to an arbitrary M\"obius  surface:
 \begin{Definition}\label{def:llift}
 Let $y: M \to \SSS^n$ be a M\"obius   surface, and let $V$ be as defined above.
A \emph{Lorentzian $2$-plane lift} 
 or \emph{$\LL$-lift} is 
 a map $\Z: M \to \Gr$ such that, for any lift $Y$ of $y$, we have 
 $Y(p) \subset \Z(p) \subset V_p$ for all $p \in M$.
 \end{Definition}
 
\begin{Remark}\label{rmk:global} We will be interested in \emph{conformal} $\LL$-lifts. These exist on a neighbourhood 
of a point (see below), but global existence would
  induce a bundle decomposition $V = Z\oplus Z^{\perp}$ and it may be only true under a topological assumption. However, it is known 
 that global conformal $\LL$-lifts exist for special classes of
 Willmore surfaces:
\blue{ \begin{enumerate}
     \item Every minimal surface in the space forms $\mathbb S^n$,   $\mathbb R^n$ and $\mathbb H^n$ has a natural conformal $\LL$-lift associated to its dual surface \cite{Helein,Helein2, Xia-Shen}.  
 \item  Under some conditions, in the sense of Bryant \cite{Bryant1984} or Babich-Bobenko \cite{Ba-Bo},  a minimal surface  in
 $\mathbb R^n$ or $\mathbb H^n$,
 can extend smoothly across infinity by conformally embedding $\mathbb R^n$ or $\mathbb H^n$ into $\mathbb S^n$. But  the $\LL$-lifts 
associated to the dual surface cannot extend to the possible ends or infinite boundary since these points are the umibilic points of the surfaces. 
 \item  The homogeneous Willmore torus constructed by Ejiri \cite{Ejiri1988} is not S-Willmore and it admits two
 different global adjoint surfaces, thus two different global  conformal $\LL$-lifts. 
 \end{enumerate}}
 \end{Remark}
 If $Y$ is a canonical lift, then we may construct a Lorentzian
$2$-plane lift by considering a section 
$\widehat Y \in \Gamma (V)$ satisfying 
\begin{equation}\label{eq:yhatdef}
\langle \widehat Y, \widehat Y \rangle =0, \quad \langle \widehat Y, Y \rangle =-1.
\end{equation}
  Then a $2$-plane bundle given by 
$\Z= [Y \wedge \widehat Y]$
is well defined:
the choice of $\widehat Y$ does not depend on the conformal coordinate:  if $w=w(z)$ then the 
 canonical lift $Y$ changes to $\tilde Y = |w_z| Y$ and  $\widehat {\tilde Y}$ defined 
by $|w_z|^{-1} \widehat Y$ also satisfies \eqref{eq:yhatdef},  and thus
$Y \wedge \widehat Y = \tilde Y \wedge \widehat {\tilde Y}$ is well defined.
\begin{Lemma} Let $\Z: M \to \Gr$ be a Lorentzian $2$-plane subbundle of $M \times \real^{n+1,1}$, spanned by null vector fields
$Y$ and $\widehat Y$ satisfying $\langle Y, \hy \rangle = -1$.
Then $\Z$ is conformal  if and only if the function $\theta$ defined by$:$
\[
\theta := 2\langle Y \wedge Y_z,  \widehat Y \wedge \widehat Y_z \rangle
\]
is zero.
\end{Lemma}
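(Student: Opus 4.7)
The plan is to split $Y_z$ and $\widehat Y_z$ with respect to the orthogonal decomposition $M \times \real^{n+1,1} = \Z \oplus \Z^\perp$, exploit the null and normalization conditions to pin down the component structure, and then compare $\theta$ with the natural bivector square $\langle \mathcal Z_z, \mathcal Z_z \rangle$ where $\mathcal Z := Y \wedge \hy$.

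First I would differentiate the three identities $\langle Y, Y\rangle = 0$, $\langle \hy, \hy\rangle = 0$, $\langle Y, \hy\rangle = -1$. Writing $Y_z = aY + b\hy + Y_z^\perp$ and $\hy_z = cY + d\hy + \hy_z^\perp$ with $Y_z^\perp, \hy_z^\perp \in \Z^\perp$, these constraints force $b = 0$, $c = 0$, and $d = -a$, with $a$ otherwise free. Consequently the $\Z$-components in $Y_z \wedge \hy + Y \wedge \hy_z$ cancel, giving
\[
\mathcal Z_z = Y_z^\perp \wedge \hy + Y \wedge \hy_z^\perp,
\]
which is manifestly tangent to the submanifold of unit timelike bivectors (as it must be, since $\langle \mathcal Z, \mathcal Z\rangle = -1$).

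Next I would compute $\langle \mathcal Z_z, \mathcal Z_z\rangle$ using the standard identity $\langle u\wedge v, x\wedge y\rangle = \langle u,x\rangle \langle v,y\rangle - \langle u,y\rangle\langle v,x\rangle$. The two self-pairings $\langle Y_z^\perp \wedge \hy,\, Y_z^\perp \wedge \hy\rangle$ and $\langle Y \wedge \hy_z^\perp,\, Y \wedge \hy_z^\perp\rangle$ both vanish, because $Y$ and $\hy$ are null and orthogonal to the perpendicular components. The cross term yields
\[
\langle \mathcal Z_z, \mathcal Z_z\rangle = 2\langle Y_z^\perp, \hy_z^\perp\rangle.
\]
Identifying $\Gr$ with the manifold of unit timelike decomposable bivectors (up to sign) with the inner product inherited from $\Lambda^2 \real^{n+1,1}$, conformality of $\Z : M \to \Gr$ with respect to the conformal coordinate $z$ is precisely the condition $\langle \mathcal Z_z, \mathcal Z_z\rangle = 0$.

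Finally, I would expand $\theta = 2\langle Y\wedge Y_z,\, \hy \wedge \hy_z\rangle$ by the same bivector identity, obtaining $-2\langle Y_z, \hy_z\rangle + 2 a^2$ after using $\langle Y,\hy\rangle = -1$, $\langle Y_z, \hy\rangle = -a$, and $\langle Y, \hy_z\rangle = -d = a$. Substituting the decomposition to get $\langle Y_z, \hy_z\rangle = a^2 + \langle Y_z^\perp, \hy_z^\perp\rangle$ makes the $a^2$ terms cancel, leaving $\theta = -2\langle Y_z^\perp, \hy_z^\perp\rangle = -\langle \mathcal Z_z, \mathcal Z_z\rangle$. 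Hence $\theta = 0$ if and only if $\Z$ is conformal. The only subtle point, rather than a true obstacle, is the identification of the Grassmannian metric with the bivector inner product, which is standard and robust to the overall sign ambiguity of $\mathcal Z$.
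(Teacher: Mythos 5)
Your argument is correct and is essentially the paper's proof: the paper dismisses this lemma with ``a computation using $\langle Y,Y\rangle=\langle \widehat Y,\widehat Y\rangle=0$ and $\langle Y,\widehat Y\rangle=-1$,'' and your decomposition $Y_z=aY+Y_z^{\perp}$, $\widehat Y_z=-a\widehat Y+\widehat Y_z^{\perp}$ supplies exactly that computation, correctly showing that the $a$-terms cancel so that $\theta=\mp 2\langle Y_z^{\perp},\widehat Y_z^{\perp}\rangle$ is, up to sign, the $(2,0)$-part of the pulled-back metric. The only point worth flagging is the overall sign: with the Gram-determinant convention on $\Lambda^2\mathbb R^{n+1,1}$ you get $\theta=-\langle \mathcal Z_z,\mathcal Z_z\rangle$ while the paper's later formula \eqref{eq:conformal} implicitly uses $\langle Z_z,Z_z\rangle=+\theta$; this is a harmless normalization of the Grassmannian metric and does not affect the equivalence.
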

\begin{proof}
A computation using $\langle Y, Y \rangle = \langle \widehat Y, \widehat Y \rangle = 0$ and $\langle Y, \hy \rangle = -1$.
\end{proof}
%%%%%%%%%%%%%%%%%
\subsection{Existence of conformal $\LL$-lifts and the 
Riccati equation}
We now consider the problem of finding $\widehat Y$ such that $\Z=[Y \wedge \widehat Y]$ is conformal. 
To get an explicit condition on $\hy$, at least locally, we choose a canonical frame for $V$,
and write down the equations that the coefficients of $\hy$ in this basis must satisfy.

For the bundle $V
 = \operatorname{span}\left\{ Y, \, \Re Y_z,\, \Im Y_z,\, Y_{z \bar z}  \right\}$, 
 we define an alternative basis 
 \[\{Y, \, \Re Y_z,\, \Im Y_z,\, N\},\]
 where $N$ is 
 the unique section of $\Gamma (V)$ such that 
\begin{equation}\label{eq:N}
  \langle N, N\rangle = 0, \quad  
 \langle N, Y\rangle = -1, \quad \langle N, Y_z \rangle =  \langle N, Y_{\bar z} \rangle =0.
\end{equation}
 Since $V$ is a rank four sub-bundle, the above equations determine the
 unique section $N$. However, the conditions on $N$ are not invariant under
 coordinate changes.
 
Given $Y$ and $\hat Y$ as in the previous section, with 
\begin{equation}\label{eq:yhat}
\langle \widehat Y, \widehat Y \rangle =0, \quad \langle \widehat Y, Y \rangle =-1,
\end{equation}
 in $\mathbb R^{n+1, 1}$, 
 let us define a complex function $\mu : \mathbb D \to \mathbb C$ as follows:
 \begin{equation}\label{eq:Yhat}
 \widehat Y = N + \bar \mu  Y_z + \mu  Y_{\bar z} 
                + \frac12 |\mu|^2 Y. 
\end{equation}
  Under the  change of coordinate $w= w(z)$ the canonical lift 
 $Y$ changes as  $|w_z| Y$ and $\widehat Y$ changes 
 $|w_z|^{-1} \widehat Y$.
 Thus the complex function 
 $\mu$ has the following 
 transformation rule:
\begin{equation}\label{eq:tildemu}
 \mu = \tilde \mu w_z+ \frac{w_{zz}}{w_z}.
\end{equation}
 We now consider the derivative of $\widehat Y$ 
 from \eqref{eq:Yhat}, and by using \eqref{eq:Hill} we obtain
\[
 \widehat Y_z = \frac{\mu}2 \widehat Y 
 + \theta \left(Y_{\bar z} + \frac{\bar \mu}2 Y\right)
 + \rho \left(Y_{z} + \frac{\mu}2 Y \right)
 +  L,
\]
 where we define
\begin{align}
\label{eq:thetarhozeta1} \theta &= \mu_z - \frac12{\mu^2}-s \in \C,  \\
\label{eq:thetarhozeta2} \rho &= \bar \mu_z - 2 \langle \k, \overline \k \rangle \in \C,  \\
\label{eq:thetarhozeta3} L&=  2 D_{\bar z}\k + {\bar \mu \k} \in \Gamma(V_{\mathbb C}^{\perp}),
\end{align}
 and $D$ denotes the normal connection. Note that the function $s$ and 
 the section $\k$ are respectively the Schwarzian derivative and 
 the Hopf differential. 
  A straightforward computation shows that $\theta$ in 
 \eqref{eq:thetarhozeta1} and
 $\rho$ in \eqref{eq:thetarhozeta2} can be computed as
 \[
 \theta = 2 \la Y \wedge Y_z, \widehat Y \wedge \widehat Y_z \ra, \quad 
\rho = 2 \la Y \wedge Y_{\bar z}, \widehat Y \wedge \widehat Y_z \ra,
 \]
 and thus $\theta dz^2$ and $\rho\, dz d \bar z$ are well-defined 
 $(2,0)$- and $(1, 1)$-forms, respectively.

 It is known that $\theta =0$ can be locally solved for 
 a complex function $\mu$:
 \begin{Lemma}[Riccati equation]\label{lem:Riccati}
 For a given function $s$, there exist local solutions $\mu$ to the following 
 differential equation
 \begin{equation}\label{eq:Riccati}
  \mu_z - \frac12{\mu^2}-s=0.  
 \end{equation}
 The equation \eqref{eq:Riccati} will be commonly called the {\rm Riccati equation}.
 Moreover,  if $\mu$ is a solution of the Riccati equation \eqref{eq:Riccati}, 
 then $\tilde \mu$ in \eqref{eq:tildemu} is a solution of 
 $\tilde \mu_w - \frac12{\tilde \mu^2}-\tilde s=0$,
 where $\tilde s$ is given in \eqref{eq:tildes}.
 \end{Lemma}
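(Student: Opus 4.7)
The plan has two parts: linearize the Riccati equation to obtain local existence, and then verify the coordinate covariance by a direct substitution.

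For existence, I would try the logarithmic substitution $\mu = -2 f_z/f$, which is the standard linearization trick for Riccati equations and is also natural here because the null section $\hy$ of \eqref{eq:Yhat} is built from a lift. A short calculation gives $\mu_z - \frac{1}{2}\mu^2 = -2 f_{zz}/f$, so \eqref{eq:Riccati} is equivalent to the homogeneous Hill equation
\begin{equation*}
f_{zz} + \frac{s}{2}\, f = 0,
\end{equation*}
the homogeneous counterpart of \eqref{eq:Hill}. Since only $\partial_z$ appears, this is really a linear second-order ODE in $z$ with $\bar z$ as a smooth parameter; standard ODE theory then produces a local solution $f$, smooth in both variables, with any prescribed initial data. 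Choosing $f(z_0,\bar z_0)=1$ keeps $f$ non-vanishing on a neighborhood, and then $\mu = -2 f_z/f$ is a smooth local solution of the Riccati equation.

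For the transformation rule, I would substitute $\mu = \tilde \mu w_z + w_{zz}/w_z$ into \eqref{eq:Riccati} and push $\partial_z$ through via the chain rule $\partial_z \tilde \mu = \tilde \mu_w w_z$. Expanding $\mu_z$ yields $\tilde \mu_w w_z^2 + \tilde \mu w_{zz} + (w_{zz}/w_z)'$, while $\frac{1}{2}\mu^2$ yields $\frac{1}{2}\tilde \mu^2 w_z^2 + \tilde \mu w_{zz} + \frac{1}{2}(w_{zz}/w_z)^2$. The cross-terms $\tilde \mu w_{zz}$ cancel in the difference, and the purely $w$-derivative remainder is exactly the classical Schwarzian combination $(w_{zz}/w_z)' - \frac{1}{2}(w_{zz}/w_z)^2$ appearing in \eqref{eq:tildes}. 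Dividing by $w_z^2$ and invoking \eqref{eq:tildes} to identify this combination with $s - \tilde s\, w_z^2$ yields $\tilde \mu_w - \frac{1}{2}\tilde \mu^2 = \tilde s$.

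No conceptual obstacle arises. The existence argument hinges only on recognizing the Riccati equation as a disguised linear equation and on the fact that $\partial_{\bar z}$ enters only parametrically, while the transformation part is a routine chain-rule bookkeeping whose punchline is that the cancellations are arranged precisely so that the classical Schwarzian emerges—which is exactly what makes the Riccati equation covariant under the assumed transformation law for $s$.
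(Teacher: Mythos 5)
Your reduction of \eqref{eq:Riccati} to the homogeneous Hill equation $f_{zz}+\tfrac{s}{2}f=0$ via $\mu=-2f_z/f$ is a sensible linearization, and your verification of the coordinate covariance is correct and matches what the paper does (the paper simply combines \eqref{eq:tildes} and \eqref{eq:tildemu} in exactly the computation you describe). However, the existence part contains a genuine error: you justify solvability of $f_{zz}+\tfrac{s}{2}f=0$ by calling it ``a linear second-order ODE in $z$ with $\bar z$ as a smooth parameter.'' Here $z$ and $\bar z$ are not independent variables: $z=x+iy$ is a conformal coordinate on a real two-dimensional domain, $\partial_z=\tfrac12(\partial_x-i\partial_y)$ is a genuine first-order partial differential operator, and $s$ is the Schwarzian derivative of a general M\"obius surface, which is \emph{not} holomorphic (its $\bar z$-derivative is prescribed by the conformal Gauss equation in Proposition \ref{Pp:Movingframe}). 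You therefore cannot ``freeze $\bar z$'' and invoke ODE theory; $f_{zz}+\tfrac{s}{2}f=0$ is a complex-coefficient second-order PDE, and \eqref{eq:Riccati} itself is a semilinear first-order PDE of $\partial$-type.

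Local existence does hold, but it requires an actual PDE argument. One route is to note that $\partial_z^2$ is elliptic (its symbol $-\tfrac14(\xi_1-i\xi_2)^2$ is nonvanishing for real $\xi\neq 0$) and use local solvability of elliptic operators to produce, on a sufficiently small disc, a solution $f=1+u$ with $u$ small, hence nonvanishing $f$. A cleaner route, and essentially the content of the reference the paper cites for this step (H\'elein, Lemma~3), is to work directly with the first-order equation $\partial_z\mu=\tfrac12\mu^2+s$: the conjugate Cauchy--Pompeiu transform furnishes a local right inverse $T$ of $\partial_z$ with $\|Tg\|_\infty\leq Cr\|g\|_\infty$ on a disc of radius $r$, so $\mu\mapsto T\bigl(\tfrac12\mu^2+s\bigr)$ is a contraction for small $r$ and a fixed point gives the desired local solution. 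Either of these would close the gap; as written, your existence argument does not.
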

 \begin{proof}
 The proof of existence  can be found  in \cite[Lemma 3]{Helein}, and using the transformation rule of the 
 Schwarzian derivative for $\tilde s$ in \eqref{eq:tildes}
 and $\tilde \mu$ in \eqref{eq:tildemu} implies that 
 $\tilde \mu$ is again a solution of the Riccati equation.
 \end{proof}
\begin{Remark}
 Any solution $\mu$ of the Riccati equation \eqref{eq:Riccati}
 is only locally defined in general. But see also Remark \ref{rmk:global}.
\end{Remark}
 A straightforward computation shows that
 for $Z = [Y \wedge \widehat Y]$
\begin{equation}\label{eq:conformal}
 \langle d Z, dZ\rangle = \theta dz^2 + \frac12(\rho + \bar \rho)
(d z d \bar z + d \bar z d z) + \bar \theta d\bar z^2
\end{equation}
holds, where $\theta$, $\rho$ is given in \eqref{eq:thetarhozeta1} and \eqref{eq:thetarhozeta2}. 
 Combining Lemma \ref{lem:Riccati}, we obtain:
\begin{Lemma}\label{lem:conformal}
Let $Z = [Y \wedge \widehat Y]$ be an $\LL$-lift of 
a M\"obius surface $y$. Moreover, let $\mu$ be the 
complex function defined in \eqref{eq:Yhat}.
Then the following statements are equivalent$:$
\begin{enumerate}
\item $\Z$ is conformal.
\item $\mu$ is a solution of the Riccati equation 
\eqref{eq:Riccati}. 
\item  $Y$ and $\widehat Y$ satisfies
 $\widehat Y_z \in \operatorname{span}_\C \{ \widehat Y, Y, Y_z \}$
 modulo $V_{\mathbb C}^{\perp}$,
 where $V_{\mathbb C}^{\perp}$ denotes the complement of 
 the complexification of the $4$-dimensional 
 Lorentzian subspace $V$.
\end{enumerate}
\end{Lemma}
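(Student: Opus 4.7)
The plan is to derive all three equivalences as direct consequences of two computations already on hand: the expansion of $\widehat Y_z$ preceding \eqref{eq:thetarhozeta1}--\eqref{eq:thetarhozeta3}, and the formula \eqref{eq:conformal} for $\langle dZ, dZ\rangle$. No new computation is really required.

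For the equivalence $(1)\Leftrightarrow(2)$, I would read off \eqref{eq:conformal}: conformality of $\Z$ is precisely the vanishing of the $(2,0)$ and $(0,2)$ parts of $\langle dZ, dZ\rangle$, i.e.\ $\theta = 0$. By \eqref{eq:thetarhozeta1}, this is the Riccati equation \eqref{eq:Riccati}.

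For $(2)\Leftrightarrow(3)$, I would start from the identity
\[
\widehat Y_z \;=\; \frac{\mu}{2}\widehat Y + \theta\left(Y_{\bar z} + \frac{\bar\mu}{2}Y\right) + \rho\left(Y_{z} + \frac{\mu}{2}Y\right) + L,
\]
with $L\in \Gamma(V_{\mathbb C}^\perp)$ by \eqref{eq:thetarhozeta3}. Reducing modulo $V_{\mathbb C}^\perp$ kills $L$ and leaves a combination of $\widehat Y$, $Y_z$, $Y_{\bar z}$ and $Y$. The quadruple $\{Y, Y_z, Y_{\bar z}, \widehat Y\}$ forms a basis of $V_{\mathbb C}$, since \eqref{eq:Yhat} expresses $\widehat Y = N + \bar\mu Y_z + \mu Y_{\bar z} + \frac{1}{2}|\mu|^2 Y$ while $\{Y, Y_z, Y_{\bar z}, N\}$ is already a basis of $V_{\mathbb C}$ by \eqref{eq:N} together with the rank-four property of $V$. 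In this basis the coefficient of $Y_{\bar z}$ in $\widehat Y_z$ is exactly $\theta$, so condition (3) is equivalent to $\theta = 0$, and hence to the Riccati equation \eqref{eq:Riccati}.

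The only step that really calls for a line of justification is the linear independence of $\{Y, Y_z, Y_{\bar z}, \widehat Y\}$ inside $V_{\mathbb C}$, which guarantees that condition (3) truly isolates the vanishing of the $Y_{\bar z}$-coefficient. Beyond that, the lemma is pure bookkeeping on the identities already derived in this subsection, so I do not anticipate a real obstacle.
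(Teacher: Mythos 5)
Your proof is correct and follows essentially the same route as the paper, which obtains the lemma directly from the expansion of $\widehat Y_z$ and formula \eqref{eq:conformal} without writing out further argument. Your explicit verification that $\{Y, Y_z, Y_{\bar z}, \widehat Y\}$ is a basis of $V_{\mathbb C}$ (so that condition (3) really isolates the vanishing of the $\theta$-coefficient) is a small but worthwhile detail that the paper leaves implicit.
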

 Note that from \eqref{eq:conformal}, if
 $Z$ is conformal, the conformal factor is $2 \Re \rho$.

\subsection{The moving frame and the structure equations}
 From now on we only consider a solution $\mu$
 of the Riccati equation for a Mobius surface in $\mathbb S^n$, that is, we choose a section $\widehat Y  \in \Gamma (V)$ such that $\la \widehat Y, \widehat Y \ra = 0$ and
 $\la \widehat Y, Y \ra = -1$ hold, and $Z = [Y \wedge \widehat Y]$ is 
 conformal. Then $\widehat Y$ is well-defined, 
 $\mu dz = 2 \langle \widehat Y, Y_z\rangle dz$ becomes a complex connection $1$-form
 on a surface $y$ and $\widehat Y$ defines another surface 
 $\hat y = [\widehat Y]$.
 Note that the surface $\hat y$ can be degenerate in general, and 
 moreover $\hat y$ could coincide with $y$ at some points.

 Let $\{\psi_1, \dots, \psi_{n-2}\}$ be an orthonormal frame of the normal bundle $V^{\perp}$
 and define complex functions  $\gamma_{j}, k_j$ and 
 $d_{ij} (1 \leq i, j \leq n-2)$ by
\begin{align}\label{eq:normalcomponent}
L :=\sum_{j=1}^{n-2} \red{\frac1{2}}\gamma_j \psi_j, \quad 
\k := \sum_{j=1 }^{n-2} \red{\frac12} k_j \psi_j,\quad 
 D_z \psi_i := \sum_{j= 1}^{n-2} \red{\frac12d_{ji}} \psi_j.
\end{align} 
 Note that $L=2  D_{\bar z}\k +  \bar \mu \k$, and 
 since $\{\psi_1, \dots, \psi_{n-2}\}$ is an orthonormal basis, $d_{ij}+ d_{ji}=0 \, (i,j =1, \dots, n-2)$ 
 hold.\footnote{
 The section $L$ is equal to  $2 \zeta$ of \cite[p. 419]{BW}.
 We have chosen factors, respectively, $1/2$ 
 in \eqref{eq:normalcomponent} comparing to \cite[Proposition 7.4]{BW}, 
 since the Maurer-Cartan form in the following can be written in a simple manner.}
 Then the fundamental system of a M\"obius surface in $\mathbb S^n$ can be summarized as follows:
\begin{Proposition}\label{Pp:Movingframe}
 Let $Y$ be the canonical lift of a M\"obius surface $y$
 and 
 $Z = [Y \wedge \widehat Y]$ a conformal $\LL$-lift.
 Moreover, define four sections of $V$ by
 \begin{equation}\label{eq:P12}
 X^{\pm} = \frac1{\sqrt{2}} (\pm Y + \widehat Y), \quad
 P^{+} = \Re (2Y_{\bar z} +  \bar \mu Y), \quad
  P^{-} = \Im (2Y_{\bar z} +  \bar \mu Y), 
\end{equation}
 and  denote by $\{\psi_1, \dots, \psi_{n-2}\}$ the orthonormal frame of the normal bundle $V^{\perp}$
 in \eqref{eq:normalcomponent}.
 Set 
\begin{equation}\label{eq:adaptedframe}
 F = 
\left(
X^+, \,
 X^-,\, P^+, P^-, 
 \psi_1, \dots, \psi_{n-2}
\right)
: \mathbb D \to \SOn.
\end{equation}
 and take $\alpha = F^{-1} d F $ the Maurer-Cartan form of $F$ and 
 decompose it by $(1,0)$- and $(0, 1)$- parts as $\alpha = \alpha^{\prime} + \alpha^{\prime \prime}$
 $(\alpha^{\prime \prime} = \overline{\alpha^{\prime}})$ and set
%original 
%\[
% \alpha^{\prime} 
% = \begin{pmatrix}
% A & B \\
%- B^T  I_{1, 1} & C
% \end{pmatrix} dz,
%\]
\begin{equation}\label{eq:alphaprime}
 \alpha^{\prime} 
 = \begin{pmatrix}
 A & B^{\sharp} \\
 B   & C
 \end{pmatrix} dz, \quad B^{\sharp}= \di (1, -1)B^T,
\end{equation}
 where $A$, $B$ and $C$ respectively take values in  $\mathfrak {so}(1,1, \mathbb C),  
 M(n, 2, \mathbb C)$ and $\mathfrak{so}(n, \mathbb C)$.
 Then the matrix valued functions $A, B$ and $C$ can be explicitly computed by
 the complex functions  $\rho$, $\mu$, $\gamma_j, k_j (j=1, \dots, n-2)$ and 
 $d_{ij} (i, j=1, \dots, n-2)$ in \eqref{eq:thetarhozeta2},
 \eqref{eq:Riccati}, \eqref{eq:normalcomponent}$:$
\begin{align}\label{eq:A1}
 A &= \frac12 \begin{pmatrix}
 0 & \mu\\ \mu & 0
 \end{pmatrix} ,\\
%original\label{eq:B1}
% B
% &= \frac1{2 \sqrt{2}}
%\begin{pmatrix}
% 1+ \rho & -\sqrt{-1}(1+ \rho) & 4 \gamma_1 & \dots & 4 \gamma_{n-2} \\
% 1 - \rho & -\sqrt{-1}(1- \rho) & -4 \gamma_1 & \dots & -4 \gamma_{n-2} \\
%\end{pmatrix}, 
%\\
\label{eq:B1}
 B &= 
\frac1{2 \sqrt{2}}
  \begin{pmatrix}
 \rho^+  &  \rho^-  \\
 \gamma& \gamma 
\end{pmatrix}, \quad \rho^{\pm} =  (\rho \pm 1)\begin{pmatrix} 1  \\ -\sqrt{-1}\end{pmatrix}, \quad 
\gamma = \begin{pmatrix} \gamma_1 \\ \vdots \\ \gamma_{n-2}\end{pmatrix},\\
 \label{eq:C}
 C &=  \frac12 
  \begin{pmatrix}
 0 & \sqrt{-1} \mu & - k^T \\
 -\sqrt{-1} \mu & 0 & - \sqrt{-1}k^T \\
 k  &  \sqrt{-1}k& d
\end{pmatrix}, \,
k = 
\begin{pmatrix}
k_1 \\
\vdots\\
k_{n-2}    
\end{pmatrix}, \,
d= (d_{ij}) \in \mathfrak{so}(n-2, \C).
%C &=
% \begin{pmatrix}
% 0 & -\sqrt{-1}\mu & -\frac12 q_1 & \cdots & -\frac12 q_{n-2}\\
% \sqrt{-1}\mu & 0 & \frac{i}2 q_{1} & \cdots &\frac{i}2 q_{n-2}\\
% \frac12 q_1  & - \frac{i}2 q_{1}& d_{11}  & \cdots & d_{1 n-2} \\
% \vdots & \vdots & &\ddots  & \vdots\\
% \frac12 q_{1n-2} & -\frac{i}2 q_{n-2}& d_{n-2 1}  & \cdots & d_{n-2 n-2}
%\end{pmatrix}
%\in {\rm SO}(n, \mathbb C).
\end{align}
 Moreover, the compatibility condition $F_{z \bar z} = F_{\bar z z}$ is equivalent to 
 the conformal Gauss, Codazzi and Ricci equations for a M\"obius surface $y: \mathbb D
 \to \mathbb S^n:$
\begin{align*}
& \frac12 s_{\bar z} = 3 \langle \k, D_z \overline \k \rangle  + \langle D_z \k, \overline \k \rangle, 
\tag{Gauss } \\
&\Im (D_{\bar z} D_{\bar z} \k + \frac12\bar s\k)=0, \tag{Codazzi}\\
&D_{\bar z} D_{z} \psi_j - D_{z} D_{\bar z} \psi_j = 
2 \langle \psi_j, \k \rangle \overline \k -2 \langle \psi_j, \overline \k \rangle \k, \quad (j=1, \dots, n-2). \tag{Ricci}
\end{align*}
 Note that $R^D_{\bar z z} \psi_j= D_{\bar z} D_{z} \psi_j - D_{z} D_{\bar z} \psi_j$ holds and
 $s$ and $\k$ are the Schwarzian derivative and the Hopf differential, respectively.
 The orthonormal frame $F$ in \eqref{eq:adaptedframe} will be called the 
 {\rm adapted frame}. 
\end{Proposition}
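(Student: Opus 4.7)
The plan is to proceed in two computational stages: first, determine the Maurer--Cartan matrix $\alpha = F^{-1}dF$ explicitly, and then expand $d\alpha + \alpha \wedge \alpha = 0$ block by block to extract the Gauss, Codazzi and Ricci equations. As a preliminary, one verifies that $\{X^+, X^-, P^+, P^-, \psi_1, \ldots, \psi_{n-2}\}$ is orthonormal with signature $(-,+,\ldots,+)$, so that $F$ takes values in $\SOn$; this uses only $\langle Y, Y\rangle = \langle \widehat Y, \widehat Y\rangle = 0$, $\langle Y, \widehat Y\rangle = -1$, $\langle Y_z, Y_{\bar z}\rangle = 1/2$, and $\langle \widehat Y, Y_z\rangle = \mu/2$ (from \eqref{eq:Yhat}).

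The $(1,0)$-derivatives of the frame vectors are then to be computed from four ingredients: the Hill equation \eqref{eq:Hill} for $Y_{zz}$; the reduced formula $\widehat Y_z = \tfrac{\mu}{2}\widehat Y + \rho(Y_z + \tfrac{\mu}{2}Y) + L$, valid since the Riccati equation forces $\theta = 0$; the auxiliary identity $Y_{z\bar z} = -\langle \kappa, \bar\kappa\rangle Y + \tfrac{1}{2} N$, which one obtains by expanding $Y_{z\bar z}$ in the basis $\{Y, N, Y_z, Y_{\bar z}\}$, pinning down three coefficients from $\langle Y_{z\bar z}, Y\rangle = -1/2$ and $\langle Y_{z\bar z}, Y_z\rangle = 0$, and the $Y$-coefficient by differentiating $\langle Y_{\bar z}, \widehat Y\rangle = \bar\mu/2$ in $z$ and invoking the definition of $\rho$; and the normal-connection expansion $D_z \psi_j = \sum_i \tfrac{d_{ij}}{2}\psi_i$ from \eqref{eq:normalcomponent}. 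Inverting the definition of $P^\pm$ to write $Y_z = \tfrac{1}{2}(P^+ - \sqrt{-1}P^-) - \tfrac{\mu}{2}Y$ and then expressing each derivative in the orthonormal basis yields $A$, $B$, $C$ directly; the tangential part of $(\psi_j)_z$ is extracted by duality from $\langle \psi_j, P^\pm\rangle = \langle \psi_j, X^\pm\rangle = 0$ once the coefficients $k_j$ and $\gamma_j$ are known from the other derivatives.

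The flatness condition then reduces to $\overline{\mathbf{A}}_z - \mathbf{A}_{\bar z} + [\mathbf{A}, \overline{\mathbf{A}}] = 0$, where $\mathbf{A}$ is the $dz$-coefficient matrix in \eqref{eq:alphaprime}. This splits into three block equations: the $(1,1)$ block in $\mathfrak{so}(1,1,\mathbb C)$ yields the Gauss equation, the off-diagonal $(2,1)$ block in $M(n,2,\mathbb C)$ gives the Codazzi equation (separated into its $P^\pm$ and $\psi_j$ components), and the $(2,2)$ block in $\mathfrak{so}(n, \mathbb C)$ produces the Ricci equation. Throughout, the Riccati relation $\mu_z = \tfrac{\mu^2}{2} + s$ and the formula $\rho = \bar\mu_z - 2\langle \kappa, \bar\kappa\rangle$ are used to eliminate the $\mu$-dependence.

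The main obstacle is pure bookkeeping: tracking signs, powers of $\sqrt{2}$ and $\sqrt{-1}$, and the block-index conventions. The only non-mechanical step is the derivation of $Y_{z\bar z} = -\langle \kappa,\bar\kappa\rangle Y + \tfrac{1}{2}N$, which I expect to be the technical hinge. Verifying that all $\mu$-dependent terms in the final compatibility equations cancel --- reflecting the fact that $\mu$ is a gauge parameter rather than intrinsic data of $y$ --- also requires repeated careful application of the Riccati equation, but presents no conceptual difficulty.
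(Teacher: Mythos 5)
Your proposal follows essentially the same route as the paper: both compute the frame derivatives from the inhomogeneous Hill equation, the identity $Y_{z\bar z}=-\langle\k,\overline\k\rangle Y+\tfrac12 N$, the reduced formula for $\widehat Y_z$ (with $\theta=0$ from the Riccati equation), and the normal connection, then re-express everything in the basis $\{X^\pm,P^\pm,\psi_j\}$ to read off $A$, $B$, $C$. The only difference is cosmetic: you propose to derive the Gauss--Codazzi--Ricci equations by expanding the flatness block by block, whereas the paper cites \cite{BPP} for that final equivalence.
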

\begin{Remark}
 Note that the sections $P^{\pm}$ in \eqref{eq:P12} can also be represented as
\begin{equation}\label{eq:Ppm}
 P^{+} = \Re (2Y_{z} +  \mu Y), \quad P^{-} = -\Im (2Y_{z} +  \mu Y).
\end{equation}

\end{Remark}
\begin{proof}For the canonical lift $Y$, the unique section $N$ in \eqref{eq:N} and any section $L \in \Gamma (V_{\C}^{\perp})$,
 the following equations hold$:$
\begin{equation}\label{eq:fundamental}
\left\{
\begin{array}{ll}
Y_{z z } &= - \frac{s}2 Y + \k, \\   
Y_{z \bar z }&= - \la \k, \overline \k \ra Y + \frac12 N, \\
N_{z}&= - 2 \la \k, \overline{\k} \ra Y_z - s Y_{\bar z} + 2 D_{\bar z} \k, \\   
L_{z }&= D_z L + 2 \la L, D_{\bar z} \k \ra Y -2 \la L, \k \ra Y_{\bar z}, 
\end{array}
\right.
\end{equation}
 where $s$ and $\k$ are respectively the Schwarzian derivative and the Hopf differential.
 Note that the first equation is nothing but the inhomogeneous Hill's equation in \eqref{eq:Hill}, and 
 the other three equations follow from a straightforward computation. 
 Consider the new framing  $\{Y,\widehat Y, P^{\pm}\}$ of $V$, by \eqref{eq:P12}.
 We reformulate \eqref{eq:fundamental} as 
 (Here we set $P:=P^+-\sqrt{-1}P^-$)
\begin{equation*}
\left\{
\begin{array}{ll}
Y_{z} &=  - \frac{\mu}{2} Y +\frac{1}{2} P,\\[0.1cm]
\hy_{z}&=  \frac{\mu}{2}\hy + \frac{\rho}{2} P +L, \\ 
 P_{z } &=  \frac{\mu}{2}P  + 2\k, \\   
P_{\bar{z}}&=-\frac{\bar{\mu}}{2}P+\bar{\rho}Y+\widehat{Y}, \\
L_{z }&= D_z L +  \la L, L\ra Y -\la L, \k \ra \bar{P}. 
\end{array}
\right.
\end{equation*}
So we obtain 
 \begin{equation*}
\left\{
\begin{array}{ll}
X^{\pm}_{z} &=  \frac{\mu}{2} X^{\mp} +\frac{\pm1+\rho}{2\sqrt{2}} (P^+-\sqrt{-1}P^-)+\red{\frac1{\sqrt{2}}}L,\\
P^{+}_{z } &=  \frac{-\sqrt{-1}\mu}{2}P^- +\frac{1+\rho}{2\sqrt{2}} X^+-\frac{-1+\rho}{2\sqrt{2}} X^-+\k, \\
P^{-}_{z } &=  \frac{\sqrt{-1}\mu}{2}P^+ -\frac{\sqrt{-1}(1+\rho)}{2\sqrt{2}} X^++\frac{\sqrt{-1}(-1+\rho)}{2\sqrt{2}} X^- +\sqrt{-1}\k,\\
L_{z }&= D_z L + \la L, L\ra Y -\la L, \k \ra (P^++\sqrt{-1}P^-). 
\end{array}
\right.
\end{equation*}
Then the equations \eqref{eq:A1}, \eqref{eq:B1} and \eqref{eq:C} follow from this, i.e., the Gauss equation, Codazzi equation and Ricci equations have been derived in \cite{BPP}.
\end{proof}

\begin{Remark}
 The adapted frame $F$ depends on the choice of a solution $\mu$ of the Riccati equation, 
 and it is locally defined.
 The compatibility $F_{z \bar z} = F_{\bar z z}$ is equivalent to $d \alpha + \frac12 [\alpha\wedge \alpha]=0$,
 where $\alpha = F^{-1} d F $ is the left Maurer-Cartan form of $F$.
 Moreover, it can be interpreted as a flatness of the connection 
 $d + \alpha$.
\end{Remark}
\begin{Corollary}\label{coro:Moving}
The identity 
 $B^T B =
 \red{\frac12 \langle L, L \rangle} \begin{pmatrix} 1 & 1 \\ 1 & 1 \end{pmatrix}$
  holds, where $L$ is the section in \eqref{eq:thetarhozeta3}.
  Moreover, by setting $B = (b_1\, b_2)$, 
 $b_1 - b_2 \neq 0$ holds.
\end{Corollary}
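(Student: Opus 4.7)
My plan is to prove both claims by direct computation from the explicit matrix given in \eqref{eq:B1}; no deep input is needed beyond the definition of $L$ in \eqref{eq:thetarhozeta3} and the orthonormality of the frame $\{\psi_j\}$.

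First I would write out the two columns of $B$ explicitly. From \eqref{eq:B1} and the definition $\rho^{\pm} = (\rho\pm 1)(1, -\sqrt{-1})^T$, the columns are
\[
b_1 = \frac{1}{2\sqrt{2}}\begin{pmatrix} \rho+1 \\ -\sqrt{-1}(\rho+1) \\ \gamma_1 \\ \vdots \\ \gamma_{n-2}\end{pmatrix}, \qquad
b_2 = \frac{1}{2\sqrt{2}}\begin{pmatrix} \rho-1 \\ -\sqrt{-1}(\rho-1) \\ \gamma_1 \\ \vdots \\ \gamma_{n-2}\end{pmatrix}.
\]
Then I would compute each entry of $B^T B$ as a complex bilinear pairing $b_i^T b_j$. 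The crucial observation is that the vector $(1,-\sqrt{-1})^T$ is isotropic for the standard complex bilinear form on $\mathbb{C}^2$, so in every pairing $b_i^T b_j$ the contribution of the first two rows is $(\rho\pm 1)(\rho\pm 1) + (-\sqrt{-1})^2(\rho\pm 1)(\rho\pm 1) = 0$. Hence all four entries of $B^T B$ collapse to the common value coming from the $\gamma$-rows, namely $\tfrac{1}{8}\sum_j \gamma_j^2$, giving
\[
B^T B = \tfrac{1}{8}\Bigl(\sum_{j=1}^{n-2}\gamma_j^2\Bigr)\begin{pmatrix} 1 & 1 \\ 1 & 1\end{pmatrix}.
\]

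To finish the first identity I would invoke the definition $L = \sum_{j=1}^{n-2}\tfrac12 \gamma_j \psi_j$ from \eqref{eq:normalcomponent}. Since $\{\psi_j\}$ is a real orthonormal frame of $V^\perp$ and $\langle\,,\,\rangle$ extends complex bilinearly, one has $\langle L, L\rangle = \tfrac14 \sum_j \gamma_j^2$. Substituting $\sum_j\gamma_j^2 = 4\langle L, L\rangle$ into the previous display yields $B^T B = \tfrac12 \langle L, L\rangle \bigl(\begin{smallmatrix} 1 & 1 \\ 1 & 1\end{smallmatrix}\bigr)$, as claimed.

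The second statement is immediate from the explicit formulas: the $\gamma$-rows of $b_1$ and $b_2$ are identical, and subtracting the first two rows gives
\[
b_1 - b_2 = \frac{1}{\sqrt{2}}\begin{pmatrix} 1 \\ -\sqrt{-1} \\ 0 \\ \vdots \\ 0\end{pmatrix},
\]
which is a universal nonzero vector independent of any data of the surface. So $b_1 - b_2 \neq 0$ holds identically. There is no substantive obstacle here; the whole statement is bookkeeping, and the only thing to watch is the cancellation enforced by the isotropic vector $(1,-\sqrt{-1})^T$, which is precisely the reason the parameterization \eqref{eq:B1} of $B$ is packaged in that form.
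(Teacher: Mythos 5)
Your computation is correct and is exactly the argument the paper intends: the corollary is stated as an immediate consequence of the explicit formula \eqref{eq:B1}, and the key points — the isotropy of $(1,-\sqrt{-1})^T$ killing the $\rho^{\pm}$ contributions, the identification $\langle L,L\rangle=\tfrac14\sum_j\gamma_j^2$ from \eqref{eq:normalcomponent}, and the constant nonzero difference $b_1-b_2$ — are all as in the paper. No issues.
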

\section{Willmore surfaces and conformal harmonic $\LL$-lifts}\label{sc:Harmoniclift}
%In this section we characterize Willmore surfaces 
%in terms of conformal harmonic $\LL$-lifts, and 
%equivalently in terms of the family of flat connections, 
%Theorem \ref{thm:RuhVilms}. Then adjoint transformations, S-Willmore %surfaces are discussed in terms of conformal harmonic $\LL$-lifts. %Finally the properties of 
%the moving frame equation are discussed.

\subsection{Willmore surfaces, $\LL$-lifts and  Ruh-Vilms type theorem}
 As we have seen that the connection $1$-form $d + \alpha$ with $\alpha = F^{-1} d F$ is 
 a fundamental object for a M\"obius surface $y$, and we now introduce a family of connections $1$-forms $d + \alpha^{\lambda}$ 
 parameterized by $\lambda \in S^1$ as follows:
\begin{equation}\label{eq:familyflat}
 \alpha^{\lambda} = \lambda^{-1} \alpha^{\prime}_{\mathfrak p} + 
 \alpha_{\mathfrak k} +
\lambda \alpha^{\prime \prime}_{\mathfrak p},
\end{equation}
 where $\mathfrak g = \operatorname{Lie} (\SOn)$ will be decomposed as
 $\mathfrak g= \mathfrak k \oplus \mathfrak p$
 with $\mathfrak k =  \operatorname{Lie}(\SOo \times {\rm SO}(n))$,
 and $\alpha_{\mathfrak k}$ and $\alpha_{\mathfrak p}$ are 
 the $\mathfrak k$- and the $\mathfrak p$-valued $1$-forms.
 Moreover $\prime$ and $\prime \prime$ denote the $(1,0)$- and 
 the $(0,1)$-parts, respectively.
 Note that $\alpha^{\lambda}|_{\lambda =1} = \alpha$, and $d + \alpha^{\lambda}$
 is flat for $\lambda=1$. Then the flatness of a whole family  $d + \alpha^{\lambda}$ 
 parameterized by $\lambda \in S^1$
 gives an additional constraint for a M\"obius surface $y$. 
  We now characterize Willmore surfaces in terms of a conformal $\LL$-lift and the family of connections $d + \alpha^{\lambda}$.
\begin{Theorem}[Ruh-Vilms type theorem]\label{thm:RuhVilms}
 Let $Y$ be the canonical lift of a conformal immersion $y :\D \to \mathbb S^n$ and $Z = [Y \wedge \widehat Y]$ 
  a conformal $\LL$-lift.
 Then the following statements are equivalent$:$
\begin{enumerate}
 \item The surface is a Willmore surface.
 \item The section $L$
  in \eqref{eq:thetarhozeta3} satisfies $2 D_{\bar z} L- \bar \mu L=0$.
 \item The local $\LL$-lift $\Z$  is conformal harmonic.
 \item The family of connections $d + \alpha^{\lambda}$ in \eqref{eq:familyflat} 
 is flat for all $\lambda \in S^1$.
\end{enumerate}
 \end{Theorem}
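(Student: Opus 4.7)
The plan is to establish the four equivalences in a small cycle, using the direct computational link (1)$\Leftrightarrow$(2) as the anchor, then interpreting (2) as the precise extra condition imposed by harmonicity on top of the classical Gauss--Codazzi--Ricci equations, and finally invoking a standard loop-group principle for (3)$\Leftrightarrow$(4).

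First, for (1)$\Leftrightarrow$(2), I would compute $2D_{\bar z}L-\bar\mu L$ directly from the definition $L=2D_{\bar z}\kappa+\bar\mu\kappa$ in \eqref{eq:thetarhozeta3}. Expanding,
\begin{equation*}
2D_{\bar z}L-\bar\mu L \;=\; 4\,D_{\bar z}D_{\bar z}\kappa + 2\bigl(\bar\mu_{\bar z}-\tfrac12\bar\mu^{2}\bigr)\kappa.
\end{equation*}
Since $\mu$ solves the Riccati equation \eqref{eq:Riccati}, the conjugate version gives $\bar\mu_{\bar z}-\tfrac12\bar\mu^{2}=\bar s$, so the right-hand side equals $4(D_{\bar z}D_{\bar z}\kappa+\tfrac12\bar s\kappa)$, which vanishes exactly when (Willmore) holds.

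For (3)$\Leftrightarrow$(4) I would invoke the standard Pohlmeyer--Uhlenbeck/Ruh--Vilms principle for maps into the symmetric space $\Gr=\SOn/(\SOo\times \SOnf)$. Since $Z$ is conformal by construction and $\alpha^{\lambda}$ is the standard deformation $\lambda^{-1}\alpha'_{\mathfrak p}+\alpha_{\mathfrak k}+\lambda\alpha''_{\mathfrak p}$, flatness of $d+\alpha^{\lambda}$ for all $\lambda\in S^{1}$ separates by $\lambda$-degree into the flatness of $d+\alpha$ (which is automatic) together with the harmonicity condition $d\alpha'_{\mathfrak p}+[\alpha_{\mathfrak k}\wedge\alpha'_{\mathfrak p}]=0$; the $\lambda^{\pm2}$ terms $[\alpha'_{\mathfrak p}\wedge\alpha'_{\mathfrak p}]$, $[\alpha''_{\mathfrak p}\wedge\alpha''_{\mathfrak p}]$ vanish automatically on a Riemann surface since $\alpha'_{\mathfrak p}$ is purely of type $(1,0)$.

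The main work lies in (2)$\Leftrightarrow$(3). Using the block decomposition in Proposition \ref{Pp:Movingframe}, the harmonicity equation $d\alpha'_{\mathfrak p}+[\alpha_{\mathfrak k}\wedge\alpha'_{\mathfrak p}]=0$ reduces to the matrix equation
\begin{equation*}
B_{\bar z}+\bar C\,B-B\,\bar A \;=\; 0
\end{equation*}
in $M(n,2,\mathbb C)$, with $A$, $B$, $C$ as in \eqref{eq:A1}--\eqref{eq:C}. I would split $B$ horizontally into its top $2\times 2$ block (encoding $\rho$, $\mu$ and the ``$\pm1$'' term) and its bottom $(n-2)\times 2$ block (encoding $L$ through the $\gamma_{j}$'s of \eqref{eq:normalcomponent}). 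The equations arising from the top block reproduce the Gauss and Codazzi equations, which are automatic from the structure of a M\"obius surface and the Riccati equation (using $\rho=\bar\mu_{z}-2\la\kappa,\overline\kappa\ra$). The equations arising from the bottom block, after unpacking $D_{\bar z}L$ via the normal connection $d_{ij}$ and the coupling $\bar C_{21}$ involving $\bar k$, collapse to the single identity $2D_{\bar z}L-\bar\mu L=0$.

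The principal obstacle is the bookkeeping in this last step: one must verify carefully that the components of $B_{\bar z}+\bar C B-B\bar A=0$ which do \emph{not} encode the intrinsic Gauss--Codazzi--Ricci relations collapse precisely to $2D_{\bar z}L-\bar\mu L=0$, and that the remaining components are automatically satisfied. A secondary subtlety worth noting is that (2)--(4) each depend on the choice of conformal $\LL$-lift (hence on $\mu$), whereas (1) does not; the equivalence therefore implicitly asserts that harmonicity of the lift is independent of the solution of the Riccati equation chosen, a fact which the computation makes transparent through the algebraic cancellation enabled by \eqref{eq:Riccati}.
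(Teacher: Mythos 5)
Your proposal is correct and follows essentially the same route as the paper: the same direct computation for (1)$\Leftrightarrow$(2) via the conjugate Riccati equation, the standard loop-group argument for (3)$\Leftrightarrow$(4), and the reduction of harmonicity to $B_{\bar z}+\bar C B-B\bar A=0$, whose normal-bundle (bottom-block) components collapse to $2D_{\bar z}L-\bar\mu L=0$. The only cosmetic difference is in the remaining top-block ($\rho$-)component: you treat it as automatically satisfied as a consequence of the Gauss and Riccati equations, whereas the paper disposes of it by citing (4.10) of Ma's work on Willmore surfaces.
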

\begin{proof}
 The equivalence of (3) and (4) is a standard formulation of 
 a harmonic map from a Riemann surface into a symmetric space, since 
 the decomposition of  $\mathfrak g = \operatorname{Lie} (\SOn)$, 
 by $\mathfrak g= \mathfrak k \oplus \mathfrak p$
 with $\mathfrak k =  \operatorname{Lie}(\SOo \times {\rm SO}(n))$,
 is associated to the symmetric space $\Gr$, see for example \cite{DPW}.

 (1) $\Leftrightarrow$ (2): By definition and the Riccati equation of $\mu$, we have 
\begin{align*}
2 D_{\bar z} L-\bar \mu L&=2(2D_{\bar z}D_{\bar z} \k+\bar{\mu}_{\bar{z}}\k+\bar{\mu}D_{\bar z} \k)-\bar \mu(2D_{\bar z} \k+\bar{\mu}\k)=2(2D_{\bar z}D_{\bar z} \k+\bar{s}\k).
\end{align*}
By the Willmore equation, we obtain the conclusion.

 (2) $\Leftrightarrow$ (4): 
We compute the flatness conditions
 $d \alpha^{\lambda} + \frac12 [\alpha^{\lambda} \wedge \alpha^{\lambda}]=0$. 
 It is equivalent to the Gauss, Codazzi and Ricci equations in Proposition \ref{Pp:Movingframe}, 
 together with the extra condition $B_{\bar{z}}+\bar{C}B-B\bar{A} =0$,
 which is equivalent to 
 \begin{align*}
 \rho_{\bar{z}}-\bar{\mu}\rho-2\langle L,\bar{\k}\rangle=0,
 \quad 
\gamma_{i\bar{z}}+\frac12\sum_{j}\gamma_j\overline{\red{d_{ij}}}-\frac{\bar{\mu}}{2}\gamma_i=0, \quad(1\leq i\leq n-2).
\end{align*}
The second equation is equivalent to $2 D_{\bar z} L- \bar \mu L=0$. 
Since  $\rho_{\bar{z}}-\bar{\mu}\rho-2\langle L,\bar{\k}\rangle=0$ holds for all Willmore surfaces ((4.10) of \cite{Ma2006}), we
complete the proof. \end{proof}

%\begin{Remark}
% The second condition in Theorem \ref{thm:RuhVilms} can be translated more geometrically:
% M\"obius surfaces $y$ and $\hat y = [\widehat Y]$ sit on the central sphere of each other and co-touch.
% The co-touch will be ...??
%\end{Remark}

 From (4) in Theorem \ref{thm:RuhVilms}, there exists a family of frames $F^{\lambda}$
 from $\mathbb D$ to the following twisted loop group
\begin{equation}\label{eq:loopgroup}
 \Lambda \SOn_{\sigma}= \left\{g: S^1 \to \SOn\mid g(-\lambda) = \sigma g(\lambda)\right\},
\end{equation}
 where $\sigma$ is the involution of the symmetric space $\Gr$ given by 
\begin{equation}\label{eq:sigma}
 \sigma g = \operatorname{Ad} \operatorname{diag}(-1,-1,1,\dots,1) g, \quad g \in \SOn. 
\end{equation}
\begin{Definition}
 The family of moving frames 
 $F^{\lambda} : \mathbb D \to  \Lambda \SOn_{\sigma}$ given by $\alpha^{\lambda} = (F^{\lambda})^{-1} d F^{\lambda}$ of (4) in Theorem \ref{thm:RuhVilms} will be called the \textit{extended frame} of a Willmore surface $y$.
\end{Definition}
 Note that the extended frame $F^{\lambda}$ evaluated $\lambda =1$ 
 is the adapted frame of $y$ up to a left multiplication of a constant factor in $\SOn$.
 
\subsection{Adjoint transformation, S-Willmore surfaces and  $\LL$-lifts}\label{sbsc:SWill}
 Let us recall the Willmore condition
 $D_{\bar z}  D_{\bar z} \k + \frac12 \bar s\k =0$. 
 By the Codazzi equation, the Willmore condition is also equivalent to 
 $\Re(D_{\bar z}  D_{\bar z} \k + \frac12 \bar s\k) =0$.
 Then we define the special class of Willmore surfaces, \cite[Theorem 1.1]{Ejiri1988}.
\begin{Definition}\label{def:SWill}
 A Willmore surface $y : M \to \mathbb S^n$ is called an \textit{$S$-Willmore surface} if $D_{\bar z} \k \parallel \k$ holds, that is, there exists some function $r$ such that $D_{\bar z} \k + r \k=0$ holds.
\end{Definition}
%\begin{Remark}
% S-Willmore surfaces were originally defined in \cite[Theorem 1.1]{Ejiri1988} 
% and Definition \ref{def:SWill} is a slight generalization.    
%\end{Remark}
 As seen in Proposition \ref{Pp:Willmore},
 the Willmore condition for a M\"obius surface with a Riccati equation 
 $\mu_z - \frac12{\mu^2}-s =0$ is characterized as
\begin{equation}\label{eq:Willmore}
  D_{\bar z} L -\frac{\bar \mu}2L=0,
\end{equation}
 where $L=  2 D_{\bar z} \k + \bar \mu \k$.
 Then this condition means that the covariant derivative of $L$
 is parallel to itself $D_{\bar z} L \parallel L$, and 
 thus we could consider the following sufficient conditions:
 $\langle L, L\rangle =0$ or $L=0$.
 This observation gives the following results.
\begin{Lemma}\label{lem:S-A}
 Let $y$ be a M\"obius surface in $\mathbb S^{n}$ with a 
 conformal $\LL$-lift $Z = [Y \wedge \widehat Y]$.
 Then the following statements hold$:$
\begin{enumerate}
\item  The surface $y$ is \textit{S-Willmore} with a dual surface $\hat{y}=[\hat{Y}]$ if and only if 
$L=0$.

 \item If $y$ is a Willmore surface and $\la L, L\rangle=0$ is satisfied, then the surface $\hat y =[\widehat Y]$ with $\widehat Y$ in \eqref{eq:Yhat} 
 is a Willmore surface, and it will be called the {\rm adjoint transformation} of $y$. 
\end{enumerate}
\end{Lemma}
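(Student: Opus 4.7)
For part~(1), the $(\Leftarrow)$ direction is essentially one line: $L = 2D_{\bar z}\kappa + \bar\mu\kappa = 0$ rewrites as $D_{\bar z}\kappa = -\tfrac{\bar\mu}{2}\kappa$, which is the S-Willmore parallelism condition of Definition~\ref{def:SWill} with $r = \bar\mu/2$. To upgrade this to the full Willmore equation, I would differentiate once more and substitute the conjugate Riccati equation $\bar\mu_{\bar z} = \tfrac12\bar\mu^2 + \bar s$; two of the three resulting terms cancel and leave $D_{\bar z}D_{\bar z}\kappa + \tfrac{\bar s}{2}\kappa = 0$. For the $(\Rightarrow)$ direction, given $D_{\bar z}\kappa + r\kappa = 0$, the integrability condition $r_{\bar z} = r^{2} + \bar s/2$ (a consequence of combining the Willmore equation with the S-Willmore relation) is exactly the conjugate Riccati equation for $\mu = 2\bar r$, so this $\mu$ is an admissible solution of the Riccati equation; the corresponding $\widehat Y$ from \eqref{eq:Yhat} then satisfies $L = 0$ by construction and is a lift of the Willmore dual surface.

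For part~(2), my first step is to note that the structure equation $\widehat Y_z = \tfrac{\mu}{2}\widehat Y + \tfrac{\rho}{2}P + L$ (from the proof of Proposition~\ref{Pp:Movingframe}) together with the isotropy relations $\langle\widehat Y,\widehat Y\rangle = \langle\widehat Y,P\rangle = \langle P,P\rangle = 0$ and $\langle\widehat Y,L\rangle = \langle P,L\rangle = 0$ (the latter from $L\in V_{\mathbb C}^{\perp}$) yields $\langle\widehat Y_z,\widehat Y_z\rangle = \langle L,L\rangle$. Hence the hypothesis $\langle L,L\rangle = 0$ forces $\widehat Y$ to be a conformal lift of $\hat y$. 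To then conclude the Willmore property of $\hat y$, I would argue that $Z = [Y\wedge\widehat Y]$ serves as a conformal $\LL$-lift for $\hat y$ as well: conformality is symmetric in $Y$ and $\widehat Y$ by the formula $\theta = 2\langle Y\wedge Y_z, \widehat Y\wedge \widehat Y_z\rangle$ from Lemma~\ref{lem:conformal}, while conformal harmonicity of $Z$ into $\Gr$ is a property of $Z$ alone, already established from the Willmore property of $y$ via Theorem~\ref{thm:RuhVilms}. Applying Theorem~\ref{thm:RuhVilms} now in reverse, with $\hat y$ as the primary surface, then gives that $\hat y$ is Willmore.

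The main obstacle is verifying that $Z$ qualifies as an $\LL$-lift for $\hat y$ in the sense of Definition~\ref{def:llift}, i.e.\ that $Z\subset V_{\hat y}$, equivalently $Y\in V_{\hat y}$. Since $\widehat Y_z$ carries an $L$-component in $V_y^{\perp}$, the four-plane $V_{\hat y} = \operatorname{span}\{\widehat Y,\Re\widehat Y_z,\Im\widehat Y_z,\widehat Y_{z\bar z}\}$ is generically different from $V_y$. I would expand $\widehat Y_{z\bar z}$ using \eqref{eq:fundamental} and the structure equation for $L_z$, and show that under the joint hypotheses $\langle L,L\rangle = 0$ and $2D_{\bar z}L - \bar\mu L = 0$ (the Willmore equation for $y$), the $Y$-component of $\widehat Y_{z\bar z}$ is nonzero, giving $Y\in V_{\hat y}$. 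This is where both hypotheses of part~(2) play an essential role beyond merely making $\widehat Y$ a conformal lift. A purely mechanical alternative is to rescale $\widehat Y$ to its canonical lift $\widehat Y^{\mathrm{can}} = \lambda\widehat Y$ (with $\lambda^{-2} = 2\langle\widehat Y_z,\widehat Y_{\bar z}\rangle = |\rho|^2 + 2\langle L,\bar L\rangle$) and verify the Willmore equation for $\hat y$ directly by computing its Hopf differential and Schwarzian, invoking $2D_{\bar z}L - \bar\mu L = 0$ to cancel unwanted terms.
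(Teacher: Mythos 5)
Your proposal is correct in substance, but it is worth noting that the paper's own ``proof'' contains essentially no argument: part~(1) is dismissed as ``a straightforward computation by using Definition~\ref{def:SWill}'' and part~(2) is delegated entirely to [Ma2006, Section~4.2]. Your computation for part~(1) is exactly the straightforward computation the authors have in mind (the chain $L=0\Rightarrow D_{\bar z}\kappa=-\tfrac{\bar\mu}{2}\kappa\Rightarrow D_{\bar z}D_{\bar z}\kappa+\tfrac{\bar s}{2}\kappa=0$ via the conjugate Riccati equation, and conversely $\mu=2\bar r$ solves Riccati because the Willmore equation forces $r_{\bar z}=r^2+\bar s/2$). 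For part~(2) you replace the external citation by a self-contained argument inside the paper's own framework: conformality of $\widehat Y$ from $\langle \widehat Y_z,\widehat Y_z\rangle=\langle L,L\rangle$, symmetry of $\theta$ in $Y$ and $\widehat Y$, and Theorem~\ref{thm:RuhVilms} applied with $\hat y$ as the primary surface. That is a genuine gain in self-containedness, and the one obstacle you flag --- that $Z$ must lie in $V_{\hat y}$ --- does close up as you sketch: writing $\widehat Y_{z\bar z}$ via the structure equations and using $\rho_{\bar z}-\bar\mu\rho-2\langle L,\bar\kappa\rangle=0$ together with $2D_{\bar z}L-\bar\mu L=0$, one finds $\widehat Y_{z\bar z}\equiv c\,Y$ modulo $\operatorname{span}\{\widehat Y,\widehat Y_z,\widehat Y_{\bar z}\}$ with $c=\langle\widehat Y_z,\widehat Y_{\bar z}\rangle=\tfrac12|\rho|^2+\langle L,\bar L\rangle$, which is nonzero exactly when $\hat y$ is immersed; your normalization $\lambda^{-2}=|\rho|^2+2\langle L,\bar L\rangle$ is also consistent with this.

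Two small caveats you should make explicit. In part~(1)$(\Rightarrow)$ the identity $r_{\bar z}=r^2+\bar s/2$ is obtained by dividing by $\kappa$, so the argument is only valid on the umbilic-free set; this matches the paper's general practice (cf.\ Proposition~\ref{pp:umbilic}) but should be stated. In part~(2) the conclusion requires $\hat y$ to be nondegenerate (the paper itself notes that ``the surface $\hat y$ can be degenerate in general''), and this is precisely the condition $c>0$ above, so it should be carried as a hypothesis rather than proved.
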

\begin{proof}
 The statement (1) is a straightforward computation by using Definition \ref{def:SWill}.
 The statement (2) has been proven in \cite[Section 4.2]{Ma2006}. 
\end{proof}  
\begin{Remark}\label{rm:adjointS}
\mbox{}
\begin{enumerate}
 \item The adjoint transformation of a Willmore surface 
 was defined in \cite[Definition 4.2]{Ma2006}.
 From Theorem 4.8 in \cite{Ma2006}, there always 
 exist adjoint transformations of a Willmore surface $y$ and 
 then $\hat y = [\widehat Y]$ is also a Willmore surface if it is not 
 degenerate, and $\hat y$ will be called an \textit{adjoint surface}. Moreover adjoint transformations of a Willmore surface $y$ 
 are not unique, since solutions of $\theta =0$ and
 $\langle L, L\rangle=0$ are not unique, in general.

\item  When $n=3$, any Willmore surface is an S-Willmore surface, 
 since $L$ is a complex function and $\langle L, L\rangle=0$ is
 equivalent to $L=0$ and the existence of adjoint transformations as 
 given in the above $(2)$.
 For $n> 4$, there exist Willmore surfaces which are not S-Willmore,
 \cite{Ejiri1982}.

%\item In \cite{Ma2006}, the conditions $\langle L, L\rangle=0$ and $\theta=0$ 
% are called the conformality and the co-touch conditions, respectively.
% In this paper, the term conformality will be replaced by the term \textit{isotropy}
% to avoid confusion.
% Geometric meanings of the isotropic (conformal) and the co-touch conditions can be explained 
%as follows, see \cite[Section 2]{Ma}: By a straightforward computation shows that 
%\[
% \langle \widehat Y_z, \widehat Y_z\rangle  =  \langle L, L\rangle + \theta \rho.
%\]
%  On the one hand, $\theta$ and $\rho$ can be represented as
%\[
% \theta = 2 \frac{\langle Y \wedge Y_z, \widehat Y \wedge \widehat Y_z\rangle}{\langle Y \wedge \widehat Y, Y \wedge \widehat Y\rangle}, \quad 
% \rho = 2 \frac{\langle Y \wedge Y_{\bar z}, \widehat Y \wedge \widehat Y_z\rangle}{\langle Y \wedge \widehat Y, Y \wedge \widehat Y\rangle}, 
%\]
% and $Y \wedge Y_z$ and $\widehat Y \wedge \widehat Y_z$ can be interpreted 
% as the complex contact elements. Now geometric meanings of 
% the co-touch and the isotropic (conformal)  conditions are clear.
\end{enumerate}
\end{Remark}
 We now consider singularities of the $\LL$-lift $Z = [Y \wedge \widehat Y]$
 of a Willmore surface $Y$ and its adjoint transformation $\widehat Y$.
 It may occur on umbilic points. 
 \begin{Proposition}\label{pp:umbilic} \cite{Ma,Ma2006}
 Let $y$ be a Willmore surface on $M$ and $M_0 \subset M$ be the umbilic free 
 subset of $y$ on $M$. Then for any $z_0 \in M_0$,
 there exists an open set $U \subset M_0$ containing $z_0$ such that 
 the canonical lifts of $y$ and the adjoint transformation $\hat y$, which 
 will be denoted by $Y$ and $\widehat Y$, define the conformal harmonic 
 $\LL$-lift $Z = [Y \wedge 
 \widehat Y]$ on $U$. 
\end{Proposition}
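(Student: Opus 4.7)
The approach is to combine the existence of adjoint transformations from \cite{Ma2006} with the Ruh--Vilms characterization of Theorem \ref{thm:RuhVilms}. First I would restrict to a neighborhood $U$ of $z_0$ on which the Hopf differential $\k$ is nowhere zero (possible since $z_0 \in M_0$) and on which a conformal coordinate $z$ is defined. The canonical lift $Y$ of $y$ with respect to $z$ is then well defined on $U$.

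Next I would invoke \cite[Theorem 4.8]{Ma2006}: since $y|_U$ is Willmore and umbilic-free, there exists a local solution $\mu : U \to \C$ satisfying both the Riccati equation \eqref{eq:Riccati} and the algebraic constraint $\langle L, L \rangle = 0$, where $L = 2 D_{\bar z}\k + \bar\mu\k$ as in \eqref{eq:thetarhozeta3}. With this $\mu$ in hand, I define $\widehat Y$ by formula \eqref{eq:Yhat}.

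The three desired properties of $Z = [Y \wedge \widehat Y]$ then follow from results already proven in the paper. By Lemma \ref{lem:conformal}, the Riccati equation for $\mu$ implies that $Z$ is a conformal $\LL$-lift. By Lemma \ref{lem:S-A}(2), the combination of ``$y$ Willmore'' and ``$\langle L, L \rangle = 0$'' forces $\hat y = [\widehat Y]$ to be itself a Willmore surface, so it is an adjoint transformation of $y$ in the sense of Remark \ref{rm:adjointS}. Finally, applying the implication $(1)\Rightarrow(3)$ of Theorem \ref{thm:RuhVilms} to the Willmore immersion $y$ together with its conformal $\LL$-lift $Z$ yields that $Z$ is conformal harmonic. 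A positive rescaling of $\widehat Y$ converting it to the canonical lift of $\hat y$ with respect to $z$ does not alter the $2$-plane bundle $[Y \wedge \widehat Y]$, so the statement holds verbatim in terms of canonical lifts.

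The main obstacle is the first step, namely the simultaneous existence of $\mu$ solving both the first-order nonlinear PDE \eqref{eq:Riccati} and the pointwise quadratic constraint $\langle L, L\rangle = 0$. These two conditions are compatible precisely because $y$ is Willmore --- the Willmore equation $D_{\bar z} D_{\bar z}\k + \tfrac12 \bar s\k = 0$ is what ensures $\langle L, L\rangle = 0$ propagates consistently along the Riccati flow. Umbilic-freeness is equally essential: $\langle L, L\rangle$ regarded as a polynomial in $\bar\mu$ has leading term $\bar\mu^2\langle\k,\k\rangle$, and a smoothly varying root exists only where $\k$ is non-degenerate. This compatibility is the technical content borrowed from Ma's Theorem 4.8.
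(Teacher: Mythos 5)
The paper offers no proof of this proposition at all --- it is stated with the citations \cite{Ma,Ma2006} and the existence of the adjoint transformation is simply quoted (cf.\ Remark \ref{rm:adjointS}(1), which again defers to Theorem 4.8 of \cite{Ma2006}). Your proposal therefore cannot be compared to an in-paper argument; what you have done is assemble the intended proof from the paper's own machinery, and the assembly is correct: Ma's Theorem 4.8 supplies a local $\mu$ satisfying both the Riccati equation \eqref{eq:Riccati} and $\langle L, L\rangle =0$; Lemma \ref{lem:conformal} then gives conformality of $Z=[Y\wedge \widehat Y]$; Lemma \ref{lem:S-A}(2) identifies $\hat y$ as an adjoint transformation; and Theorem \ref{thm:RuhVilms} ((1)$\Rightarrow$(3)) gives harmonicity. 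Two small caveats. First, your heuristic for why umbilic-freeness matters is imprecise: the leading coefficient of $\langle L, L\rangle$ as a quadratic in $\bar\mu$ is $\langle \k,\k\rangle$ (the \emph{complex bilinear} square), which can vanish even where $\k\neq 0$, so non-vanishing of $\k$ is not by itself what guarantees a smooth root --- this is exactly the technical content you must genuinely borrow from \cite{Ma2006} rather than re-derive by inspection. Second, the phrase ``canonical lift of $\hat y$'' presupposes that $\hat y=[\widehat Y]$ is an immersion near $z_0$; the paper itself warns that $\hat y$ can degenerate, so the rescaling step of your last paragraph needs the (cited) non-degeneracy of the adjoint surface on $M_0$, not merely $Y\wedge\widehat Y\neq 0$. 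Neither point invalidates your argument; both are folded into the appeal to \cite{Ma2006}, which is precisely how the paper treats the statement.
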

On the other hand, 
 for an umbilic point $z_1 \in M$ and an open set $U_1 \subset M$
 containing $z_1$, the $\LL$-lift $Z = [Y \wedge \widehat Y]$
 may not be well-defined on $U_1$ in some choice of $\mu$.
 Therefore, we need to think about the original Willmore conditions
 \eqref{eq:Willmore} in general to have a well-defined conformal 
 $\LL$-lift.

 Recall that the $n\times 2$ matrix $B$ in the Maurer-Cartan form is given in \eqref{eq:B1} and its identity in Corollary \ref{coro:Moving}.
From Lemma \ref{lem:S-A} we obtain:
\begin{Proposition}\label{pp:various}
\begin{enumerate}
\item Let $y$ be a Willmore surface and $\hat y$ be an adjoint transformation 
 of $y$ such that $Z = [Y \wedge \widehat Y]$ is well-defined. Then 
 the $n \times 2$ matrix $B$ in the  Maurer-Cartan form of the adapted frame in 
 \eqref{eq:B1} satisfies 
\begin{equation}\label{eq:adjointharmonic}
 B^T B = 0.
\end{equation}
 In this case, the conformal harmonic $\LL$-lift $\Z$ will be called 
  {\rm isotropic}.

\item 
 Let $y$ be an S-Willmore surface and $\hat y$ a dual surface 
 such that $Z = [Y \wedge \widehat Y]$ is well-defined. Then 
 the $n \times 2$ matrix $B$ in the  Maurer-Cartan form of the adapted frame in 
 \eqref{eq:B1} satisfies 
\begin{equation}\label{eq:SharmonicGauss}
 B^T B = 0, \quad \operatorname{rank} B =1.
\end{equation}
 In this case, the conformal harmonic $\LL$-lift $\Z$ will be called 
 the {\rm S-isotropic}.

\end{enumerate}
\end{Proposition}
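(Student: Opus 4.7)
The plan is to reduce both parts of the statement to the bilinear identity for $B$ in Corollary \ref{coro:Moving} and to the explicit block form of $B$ in \eqref{eq:B1}, using Lemma \ref{lem:S-A} to translate the geometric hypotheses into vanishing statements for the normal section $L$ defined in \eqref{eq:thetarhozeta3}.

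For part (1), the assumption that $\hat y = [\widehat Y]$ is an adjoint transformation of the Willmore surface $y$ (with $Z = [Y \wedge \widehat Y]$ well defined) places us in the situation of Lemma \ref{lem:S-A}(2), so $\langle L, L \rangle = 0$. The identity in Corollary \ref{coro:Moving}, namely $B^{T}B = \frac{1}{2}\langle L, L \rangle \begin{pmatrix} 1 & 1 \\ 1 & 1 \end{pmatrix}$, then gives \eqref{eq:adjointharmonic} immediately.

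For part (2), the S-Willmore hypothesis together with $\hat y$ being a dual surface brings us into the case $L = 0$ of Lemma \ref{lem:S-A}(1). In particular $\langle L, L \rangle = 0$, so $B^{T}B = 0$ by the same application of Corollary \ref{coro:Moving}. It remains to prove $\operatorname{rank} B = 1$. From \eqref{eq:normalcomponent}, $L = 0$ forces $\gamma_{1} = \cdots = \gamma_{n-2} = 0$, so the lower $(n-2) \times 2$ block in \eqref{eq:B1} vanishes and
\begin{equation*}
B = \frac{1}{2\sqrt{2}} \begin{pmatrix} (\rho+1)\,v & (\rho-1)\,v \\ 0 & 0 \end{pmatrix}, \qquad v = \begin{pmatrix} 1 \\ -\sqrt{-1} \end{pmatrix}.
\end{equation*}
Both columns $b_{1}, b_{2}$ of $B$ are thus scalar multiples of the single nonzero vector obtained by stacking $v$ with zeros, so $\operatorname{rank} B \leq 1$. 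The inequality $b_{1} - b_{2} \neq 0$ asserted in Corollary \ref{coro:Moving} then rules out $\operatorname{rank} B = 0$, giving exactly $\operatorname{rank} B = 1$ and hence \eqref{eq:SharmonicGauss}.

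There is no substantive obstacle here; the argument is essentially a bookkeeping exercise that combines the identity from Corollary \ref{coro:Moving} with the translation between $L$ and geometric data furnished by Lemma \ref{lem:S-A}. The only point that requires a moment of care is the column structure of $B$ for the rank computation in part (2), which is handled by reading off \eqref{eq:B1} once $\gamma$ is known to vanish.
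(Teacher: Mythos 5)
Your proposal is correct and follows essentially the same route as the paper, which derives the proposition directly from Corollary \ref{coro:Moving} together with Lemma \ref{lem:S-A} (the paper leaves this as an immediate consequence rather than writing out a proof). Your explicit verification of $\operatorname{rank} B = 1$ from the column structure of \eqref{eq:B1} once $\gamma = 0$ is exactly the intended bookkeeping.
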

The results are summarized as the following table$:$
\begin{center}
\begin{tabular}{c|c|c}

 Surface classes & Conformal $\LL$-lifts & $n \times 2$ matrix $B$\\
  \hline &&\\[-0.2cm]

 Willmore on $M$  & Conformal harmonic  & $ B^T B = { \frac12 \la L, L\ra} \left(\begin{matrix}
     1& 1\\ 1 & 1
 \end{matrix}\right) $ \\[0.3cm]
 
 \hline &&\\[-0.2cm]
 \red{
 \begin{tabular}{c}
Willmore with adjoint \\ 
 transformation on $M_0$ 
\end{tabular}
}
 & Isotropic conformal harmonic & $ B^T B = O_2$ \\[0.2cm]
  \hline &&\\[-0.2cm]
 
 S-Willmore on $M_0$ & S-isotropic conformal harmonic & $ B^T B = O_2$  and $\operatorname{rank} B = 1$\\

\end{tabular}
\end{center}
 Since $\mu$ is not a unique solution of the Riccati equation, thus the conformal harmonic $\LL$-lift $\Z$ of a surface (S-) Willmore is not necessarily (S-) isotropic, that is, $\mu$ may not be a solution of $\la L, L\ra=0$ (or $L=0$) 
 but a solution of $2 D_{\bar z} L- \bar \mu L=0$.
 In fact, a conformal harmonic (S-) isotropic $\LL$-lift $\Z= [Y \wedge \widehat Y]$ may not be well-defined around an umbilic point by Proposition \ref{pp:umbilic}.
 
{\bf Open questions}:
We would like to ask some open problems that originally come from H\'elein's work \red{\cite{Helein,Helein2}}.
\begin{enumerate}
    \item For a closed, oriented, Willmore surface $y$ in $S^n$, when will it admit a global isotropic conformal harmonic $\LL$-lift?
    \item We might split this problem into two sub-questions:
\begin{enumerate}
    \item If $y$ is S-Willmore, does it always admit a global  isotropic  conformal harmonic $\LL$-lift?    
    \item If $y$ is not S-Willmore, does it always admit a global isotropic conformal harmonic  $\LL$-lift?
\end{enumerate}
\end{enumerate}

\bibliographystyle{plain}
\bibliography{mybib}
\end{document}